\newtheorem{theorem}{Theorem}
\newtheorem{corollary}[theorem]{Corollary}
\newtheorem{definition}[theorem]{Definition}
\newtheorem{example}[theorem]{Example}
\newtheorem{lemma}[theorem]{Lemma}
\newtheorem{proposition}[theorem]{Proposition}
\newtheorem{remark}[theorem]{Remark}
\newenvironment{proof}[1][Proof]{\noindent\textbf{#1.} }{\ \rule{0.5em}{0.5em}}
\begin{document}

\begin{center}
\textbf{Accumulation points of the sets of real parts of zeros of the
partial sums of the Riemann zeta function }

\textbf{G.\ Mora}

\textit{Department of Mathematical Analysis. University of Alicante. 03080
Alicante (Spain)}

\textit{gaspar.mora@ua.es}
\end{center}

\textbf{ABSTRACT }Let $\zeta _{n}(z):=\sum_{k=1}^{n}\frac{1}{k^{z}}$ be the 
$nth$ partial sum of the Riemann zeta function. In this paper it is shown
that for every integer $n>2$ there exists $\delta _{n}>0$ such that $%
R^{(n)}:=\overline{\left\{ \Re z:\zeta _{n}(z)=0\right\} }$ contains to the
interval $\left[ -\delta _{n},b^{(n)}\right] $, where $b^{(n)}:=\sup \left\{
\Re z:\zeta _{n}(z)=0\right\} $. It is also demonstrated that $b^{(n)}$ is
positive for all $n>2$. Then, for every $n>2$, $\zeta _{n}(z)$ possesses
infinitely many zeros, with negative and positive real part, arbitrarily
close to any line contained in the strip $\left[ -\delta _{n},b^{(n)}\right]
\times 
\mathbb{R}
$. Finally, noting $b^{(n)}=1+\left( \frac{4}{\pi }-1+o(1)\right) \frac{%
\log \log n}{\log n}$, $n\rightarrow \infty $, it has been deduced that $%
\cap _{n=3}^{\infty }R^{(n)}$ contains a non-degenerated interval containing 
$0$, so $0$ is an accumulation point common to all the sets $P_{\zeta
_{n}}:=\left\{ \Re z:\zeta _{n}(z)=0\right\} $, $n>2$, and in particular $%
0\in R^{(n)}$ for all $n\geq 2$.

\textbf{AMS Subject Classification: }30Axx, 30D05.

\textbf{Key words:} Zeros of partial sums of the Riemann zeta function,
Almost-periodic functions, Level curves.

\section{Introduction}

The distribution of the zeros of exponential polynomials\ was mainly studied
in the first third of $XX^{th}$ century linked to the development of the
theory of differential equations, as we can see for instance in $\left[ 8%
\mbox{, }23\mbox{, }24\mbox{, }28\right] $. 
On the concrete case of the
partial sums of the Riemann zeta function, $\zeta _{n}(z):=\sum_{k=1}^{n}%
\frac{1}{k^{z}}$, the interest for knowing the distribution of their zeros
had its starting point in 1948 as consequence of a paper of Tur\'{a}n $\left[
26\right] $ where he pointed out an intriguing connexion betweeen a
particular distribution of such zeros near the line $x=1$ and the Riemann
Hypothesis (briefly R.H.). Namely, Tur\'{a}n 
$\left[ 26\mbox{, Th. I}\right]$
 proved that if 
\[
\zeta _{n}(z)\neq 0\mbox{ for }\Re z\geq 1+n^{-\frac{1}{2}+\epsilon }\mbox{, 
}n\geq N_{0}(\epsilon )\mbox{,} 
\]%
then the Riemann Hypothesis is true.

The first step to avoid Tur\'{a}n's dream of proving R.H. via the previous
theorem was given by Haselgrove $\left[ 7\right] $ few years later. Indeed,
by means of an added note at the end of $\left[ 7\right] $, Haselgrove
showed that the suggestion of Tur\'{a}n\ $\left[ 26\mbox{, (25.1)}\right] $
on the non-negativity for all $x\geq 1$ of the function $T(x):=\sum_{k\leq x}%
\frac{\lambda (k)}{k}$, was false, where $\lambda (k):=(-1)^{p(k)}$ and $%
p(k)$ is the number of prime factors of $k$ counting multiplicities. Hence,
for some $n$, one has 
\begin{equation}\label{1.1}
\sum_{k=1}^{n}\frac{\lambda (k)}{k}<0\mbox{.}  
\end{equation}%
This $n$, allows to define the Dirichlet polynomial 
\[
D_{n}(z):=\sum_{k=1}^{n}\frac{\lambda (k)}{k^{z}} 
\]%
and then the corresponding partial sum 
\[
\zeta _{n}(z)=\sum_{k=1}^{n}\frac{1}{k^{z}}\mbox{,} 
\]%
because the equivalence theorem of Bohr $\left[ 4\right] $, attains the same
set of values in any half-plane $\Re z>a$. That is, for arbitrary real $%
a$, 
\[
\left\{ D_{n}(z):\Re z>a\right\} =\left\{ \zeta _{n}(z):\Re %
z>a\right\} \mbox{.} 
\]%
Thus, since for sufficiently large real $z$, $D_{n}(z)$ is near $1$ and, by
(1.1), $D_{n}(1)<0$, there exists a real zero $x>1$ of $D_{n}(z)$.
Therefore, there is a zero of $\zeta _{n}(z)$ in the half-plane $\Re z>1$.

In 1968 Spira $\left[ 22\right] $ demonstrated the same for $n=19$. That is,
the partial sum $\zeta _{19}(z)$ has a zero in the half-plane $\Re z>1$%
. Levinson $\left[ 9\mbox{, Theorem 1}\right] $ in 1973 found an asymptotic
formula, for large $n$, for the location of the zeros of $\zeta _{n}(z)$
near the point $z=1$. In particular, he proved that theses zeros have real
part less than $1$. Voronin $\left[ 27\right] $ in 1974 showed that $\zeta
_{n}(z)$ has zeros in $\Re z>1$ for infinitely many $n$.

A sharp result on the upper bound 
\[
b^{(n)}:=\sup \left\{ \Re z:\zeta _{n}(z)=0\right\} 
\]%
was demonstrated in 2001 by Montgomery and Vaughan in $\left[ 12\right] $.
Namely, they proved that there exists $N_{0}$ such that if $n>N_{0}$ then $%
\zeta _{n}(z)\neq 0$ whenever 
\begin{equation}\label{1.2}
\Re z\geq 1+\left( \frac{4}{\pi }-1\right) \frac{\log \log n}{\log n}\mbox{%
.}  
\end{equation}%
But in the opposite direction, in 1983, Montgomery $\left[ 11\right] $ had
already shown that\ for every $0<c<\frac{4}{\pi }-1$ there exists $N_{0}(c)$
such that $\zeta _{n}(z)$ has zeros in the half-plane 
\begin{equation}\label{1.3}
\Re z>1+c\frac{\log \log n}{\log n}\mbox{, }n>N_{0}(c)\mbox{.}  
\end{equation}%
Since\ for any positive $c$, $n$ sufficiently large and any $0<\epsilon <%
\frac{1}{2}$, one has 
\[
c\frac{\log \log n}{\log n}>n^{-\frac{1}{2}+\epsilon }\mbox{,} 
\]%
Montgomery's result implies the existence of zeros of $\zeta _{n}(z)$ having
real part much larger than the bound $1+n^{-\frac{1}{2}+\epsilon }$
considered by Tur\'{a}n in $\left[ 26\mbox{, Th. I}\right] $. Hence, this
old theorem of Tur\'{a}n of 1948 became vacuous and consequently the hope of
proving R.H. via such hypothetical localization of the zeros of the partial
sums of the Riemann zeta function died.

The results of Montgomery and Vaughan, (1.2) and (1.3), imply that, fixed $%
0<c<\frac{4}{\pi }-1$, for all $n>N_{1}:=\max \left\{
N_{0},N_{0}(c)\right\} $, the upper bound $b^{(n)}$ satisfies the
inequalities 
\begin{equation}\label{1.4}
1+c\frac{\log \log n}{\log n}<b^{(n)}\leq 1+\left( \frac{4}{\pi }-1\right) 
\frac{\log \log n}{\log n}\mbox{.}  
\end{equation}%
Then, since $\lim_{n\rightarrow \infty }\frac{\log \log n}{\log n}=0$, it
follows that 
\[
\lim_{n\rightarrow \infty }b^{(n)}=1\mbox{.} 
\]%
On the other hand, Balazard and Vel\'{a}squez-Casta\~{n}\'{o}n $\left[ 2%
\right] $ demonstrated that the lower bound 
\[
a^{(n)}:=\inf \left\{ \Re z:\zeta _{n}(z)=0\right\} 
\]%
is so that 
\begin{equation}\label{1.5}
\lim_{n\rightarrow \infty }\frac{a^{(n)}}{n}=-\log 2\mbox{.}  
\end{equation}%
Therefore 
\[
\lim_{n\rightarrow \infty }a^{(n)}=-\infty \mbox{.} 
\]

Once the estimation of the bounds $a^{(n)}$ and $b^{(n)}$ has been settled,
our interest is focused on the distribution of the real parts of the zeros
of\ $\zeta _{n}(z)$ in the interior of its critical interval $\left[
a^{(n)},b^{(n)}\right] $ and, in this sense, we suggest to see $\left[ 6%
\mbox{, }13\mbox{, }14\mbox{, }15\mbox{, }16\right] $. A way of knowing the
distribution of the zeros of $\zeta _{n}(z)$ is by means of the study of the
set of their real projections 
\[
P_{\zeta _{n}}:=\left\{ \Re z:\zeta _{n}(z)=0\right\} 
\]%
and its closure set, denoted by $R^{(n)}$. Indeed, the difference between $%
R^{(n)}$ and the critical interval $\left[ a^{(n)},b^{(n)}\right] $ gives us
a certain measure of the uniformity of the distribution of the real part of
the zeros of $\zeta _{n}(z)$ and, consequently, the position of those zeros
with respect to vertical lines contained in the critical strip $\left[
a^{(n)},b^{(n)}\right] \times 
\mathbb{R}
$ where are located all the zeros of $\zeta _{n}(z)$.

The zeros of $\zeta _{2}(z)=1+\frac{1}{2^{z}}$, or those of the function $%
G_{2}(z):=$ $1+2^{z}$, are of the form 
\begin{equation}\label{1.6}
\frac{\pi (2k+1)i}{\log 2}\mbox{, }k\in 
\mathbb{Z}
\mbox{.}  
\end{equation}%
Then $a^{(2)}=b^{(2)}=0$, so $R^{(2)}=\left\{ 0\right\} $ and trivially $%
R^{(n)}=$ $\left[ a^{(n)},b^{(n)}\right] $ for $n=2$.

Since $R^{(n)}\subset $ $\left[ a^{(n)},b^{(n)}\right] $ for all $n\geq 2$,
the best possible relation between both sets one reaches when they are
equal. Thus, whenever $R^{(n)}=$ $\left[ a^{(n)},b^{(n)}\right] $, we will
say that the distribution of the real parts of the zeros of $\zeta _{n}(z)$
is completely uniform on its critical interval. It means that, given any
vertical line contained in $\left[ a^{(n)},b^{(n)}\right] \times 
\mathbb{R}
$, $\zeta _{n}(z)$ possesses infinitely many zeros arbitrarily close to that
line. We have proved $\left[ 15\mbox{, Th.12}\right] $ that this is exactly
the case of the partial sums $\zeta _{n}(z)$ for sufficiently large $n$.
That is, we have demonstrated in $\left[ 15\mbox{, Th.12}\right] $ the
existence of a positive integer $N$ such that 
\begin{equation}\label{1.7}
R^{(n)}=\left[ a^{(n)},b^{(n)}\right] \mbox{ for all }n>N\mbox{.}  
\end{equation}%
This result points out that the distribution of the real part of the zeros
of $\zeta _{n}(z)$, for all $n>N$, is completely uniform on its
corresponding critical interval. Consequently, the partial sums $\zeta
_{n}(z)$, $n>N$, in spite of the set of their frequencies $\left\{ -\log
k:k=2,...,n\right\} $ is not linearly independent (briefly l.i.) over the
rationals for any $n>3$, share with the Dirichlet polynomials, with l.i.
frequencies and coefficients of modulus $1$, the property of having the real
parts of their zeros uniformly distributed on their critical intervals (see $%
\left[ 16\mbox{, Th. 10}\right] $).

In the present paper, by means of our Theorem 2, we have given a positive
answer to an old question, on the negativity of the real part of the zeros
of the partial sums of the Riemann zeta function, posed by Bellman and Hooke
in 1963 $\left[ 3,\mbox{ p. 439, Question 25}\right] $. It has been also
proved in our Theorem 17 that for every integer $n>2$ there exists a number $%
\delta _{n}>0$ such that $\left[ -\delta _{n},b^{(n)}\right] \subset R^{(n)}$%
. Then, whenever $n>N$, by (1.7), the Theorem 17 is improved of an optimal
form because the $\delta _{n}$'s can be taken equal to $-a^{(n)}=n\log 2+o(n)
$, noting (1.5). Some relevant consequences are derived from Theorem 17:

a) By (1.4) and Corollary 3 of this paper, the set $\cap _{n=3}^{\infty
}R^{(n)}$ contains an interval of the form $\left[ -\delta ,b\right] $ for
some $\delta \geq 0$ and $b:=\min \left\{ b^{(n)}:n>2\right\} $, which is a
positive number. Then\ the strip $\left[ -\delta ,b\right] \times 
\mathbb{R}
$ is common to all the $nth$ partial sums $\zeta _{n}(z)$, $n>2$. That is,
given any vertical line contained in $\left[ -\delta ,b\right] \times 
\mathbb{R}
$, there are infinitely many zeros of every $\zeta _{n}(z)$, for all $n>2$,
arbitrarily close to that line.

b) In particular, we deduce that $0$ is an accumulation point common to all
the sets $P_{\zeta _{n}}=\left\{ \Re z:\zeta _{n}(z)=0\right\} $, $n>2$.
Moreover, because of Lemma 1, for every $n>2$, there exist infinitely many
zeros of each $\zeta _{n}(z)$, with negative and positive real parts, which
are arbitrarily close to the imaginay axis. Hence, the Theorem 17 and its
consequences have applications to the stability of the solutions of neutral
functional differential equations with delays (see for instance $\left[ 10%
\right] $).

c) By regarding the partial sum of Riemann zeta function $\zeta _{2}(z):=1+%
\frac{1}{2^{z}}$, noting (1.6), one has 
\[
\cap _{n=2}^{\infty }R^{(n)}=\left\{ 0\right\} \mbox{,} 
\]%
which proves Sepulcre and Vidal's conjecture $\left[ 20\mbox{, Conjecture 15}%
\right] $.

\section{Distribution of the zeros of $\protect\zeta _{n}(z)$ on the
half-planes $\Re z>0$ and $\Re z<0$}

\begin{lemma}
Let $f(z)$ be an almost-periodic function not identically null on a strip $%
S_{a,b}:=\left\{ z:a<\Re z<b\right\} $. If $f(z)$ has a zero at a point $%
z_{0}\in S_{a,b}$ then it has infinitely many zeros in $S_{\epsilon
}:=\left\{ z:\Re z_{0}-\epsilon <\Re z<\Re z_{0}+\epsilon \right\} $ for
arbitrary $\epsilon >0$.
\end{lemma}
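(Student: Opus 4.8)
The plan is to combine Rouch\'e's theorem with the relative density of the translation numbers of an almost-periodic function. First I would fix $z_{0}=x_{0}+iy_{0}$ with $a<x_{0}<b$ and $f(z_{0})=0$. Since $f$ is holomorphic and not identically zero on $S_{a,b}$, its zeros are isolated, so I can choose $r>0$ with $r<\epsilon $, $x_{0}-r>a$, $x_{0}+r<b$, and $f(z)\neq 0$ on the circle $C:=\{z:|z-z_{0}|=r\}$. Put $m:=\min_{z\in C}|f(z)|>0$ and fix the closed substrip $S':=\{z:x_{0}-r\leq \Re z\leq x_{0}+r\}\subset S_{a,b}$, which contains $C$.

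Next I would invoke the defining property of almost-periodicity on $S'$: for $\eta :=m$ the set $T_{\eta }$ of $\eta $-translation numbers, i.e.\ of real $\tau $ such that $\sup_{z\in S'}|f(z+i\tau )-f(z)|<\eta $, is relatively dense in $\mathbb{R}$, and in particular unbounded. (This uniform version on closed substrips is exactly the content of the Bohr theory of holomorphic almost-periodic functions.) For each $\tau \in T_{\eta }$ and each $z\in C$ we then have $|f(z+i\tau )-f(z)|<m\leq |f(z)|$, so by Rouch\'e's theorem the functions $z\mapsto f(z+i\tau )$ and $z\mapsto f(z)$ have the same number of zeros, counted with multiplicity, inside the disk $D(z_{0},r)$. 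Since $f$ vanishes at $z_{0}\in D(z_{0},r)$, there is a point $w_{\tau }\in D(z_{0},r)$ with $f(w_{\tau }+i\tau )=0$, i.e.\ $w_{\tau }+i\tau $ is a zero of $f$.

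Finally I would check that these zeros lie in $S_{\epsilon }$ and that there are infinitely many distinct ones. Indeed $\Re (w_{\tau }+i\tau )=\Re w_{\tau }\in (x_{0}-r,x_{0}+r)\subset (\Re z_{0}-\epsilon ,\Re z_{0}+\epsilon )$, so $w_{\tau }+i\tau \in S_{\epsilon }$. Because $T_{\eta }$ is unbounded I can pick $\tau _{1},\tau _{2},\dots \in T_{\eta }$ with $|\tau _{k}|\rightarrow \infty $; since $|\Im w_{\tau _{k}}-y_{0}|<r$ stays bounded while $|\tau _{k}|\rightarrow \infty $, the imaginary parts $\Im (w_{\tau _{k}}+i\tau _{k})=\Im w_{\tau _{k}}+\tau _{k}$ are pairwise distinct for large $k$, hence the zeros $w_{\tau _{k}}+i\tau _{k}$ are pairwise distinct. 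This produces infinitely many zeros of $f$ in $S_{\epsilon }$.

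The step I expect to require the most care is the uniform almost-periodicity on the closed substrip $S'$: one needs the $\eta $-translation numbers to act on a single fixed circle $C$ simultaneously, so that Rouch\'e applies with the same $m$ for every $\tau $. This is standard for holomorphic almost-periodic functions in the sense of Bohr, but it should be stated explicitly; an alternative, equivalent route is to use the normality property (every sequence of vertical translates $f(z+i\tau _{k})$ has a subsequence converging uniformly on $S'$) together with Hurwitz's theorem in place of the Rouch\'e argument.
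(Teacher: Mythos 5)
Your proof is correct, and it takes a genuinely different and more direct route than the paper's. You apply Rouch\'e's theorem directly to $f$ and its vertical translates $f(\cdot+i\tau)$: once you fix the circle $C=\{|z-z_{0}|=r\}$ with $r<\epsilon$ and $m=\min_{C}|f|>0$, any $m$-translation number $\tau$ gives $|f(z+i\tau)-f(z)|<m\leq|f(z)|$ on $C$, so $f(\cdot+i\tau)$ inherits a zero in the disk; the relative density (hence unboundedness) of the set of translation numbers then yields infinitely many distinct zeros in $S_{\epsilon}$. The paper instead constructs a \emph{sequence} of translation numbers $T_{n}$ with geometrically shrinking accuracies $\delta_{n}=\delta/2^{n-1}$, forms the translates $g_{n}(z)=f(z+iT_{n})$, invokes Montel's theorem to extract a locally uniformly convergent subsequence $g_{n_{k}}\to g$, verifies (via an auxiliary point $z_{1}$ on the line $\Re z=\Re z_{0}$ with $f(z_{1})\neq0$) that $g\not\equiv 0$ while $g(z_{0})=0$, and only then applies Rouch\'e to $g$ versus $g_{n_{k}}$. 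Your argument is shorter and avoids normal families entirely, at the cost of needing the translation-number accuracy $\eta$ to beat the specific constant $m$ on the chosen circle; the paper's normal-family route is slightly more robust in that it only needs $f(z_{0}+iT_{n})\to 0$ and local boundedness, not a precomputed $m$. One small care point, which you already anticipate: with the non-strict convention $\sup_{S'}|f(z+i\tau)-f(z)|\leq\eta$ for translation numbers, take $\eta=m/2$ rather than $\eta=m$ so that Rouch\'e's strict inequality holds on $C$.
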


\begin{proof}
We firstly note that, since the almost-periodicity implies analyticity $%
\left[ 5\mbox{, }p.101\right] $, $f(z)$ is analytic on $S_{a,b}$. Secondly,
by $\left[ 5\mbox{, }p.102\right] $, $f(z)$ is bounded on any closed
sub-strip $S_{\alpha ,\beta }:$ $=\left\{ z:a<\alpha \leq \Re z\leq \beta
<b\right\} $ of $S_{a,b}$. Now, let $z_{1}$ be a point on the line $x=\Re
z_{0}$ such that $f(z_{1})\neq 0$. Thus we claim that there exist positive
numbers $\delta $ and $l$ such that on any segment of length $l$ of the line 
$x=\Re z_{0}$ there is a point $z_{1}+iT$ verifying 
\begin{equation}\label{2.1}
\left\vert f(z_{1}+iT)\right\vert \geq \delta \mbox{.}  
\end{equation}%
Indeed, by taking $\delta =\frac{\left\vert f(z_{1})\right\vert }{2}>0$,
the almost-periodicity of $f(z)$ involves the existence of a real number $%
l=l(\delta )$\ such that every interval of length $l$\ on the imaginary axis
contains at least a number $T_{1}=T_{1}(\delta )$, called translation
number,\ satisfying 
\begin{equation}\label{2.2}
\left\vert f(z+iT_{1})-f(z)\right\vert \leq \delta \mbox{ for all }z\in
S_{a,b}\mbox{.}  
\end{equation}%
Then, by setting $z=z_{1}$ in (2.2) and according to the choice of $\delta $%
, the inequality (2.1) follows, as claimed. By reiterating this process for
every $\delta _{n}=\frac{\delta }{2^{n-1}}$, $n\geq 1$, the
almost-periodicity of $f(z)$ assures the existence of a real number $%
l_{n}=l_{n}(\delta _{n})$ such that every interval of length $l_{n}$
contains at least a translation number $T_{n}=T_{n}(\delta _{n})$ which,
without loss of generality, can be chosen in such a way that $%
(T_{n})_{n=1,2,..}$ be an increasing sequence of positive numbers with $%
T_{n+1}-T_{n}>1$, for all $n$, satisfying 
\begin{equation}\label{2.3}
\left\vert f(z+iT_{n})-f(z)\right\vert \leq \delta _{n}\mbox{ for all }z\in
S_{a,b}\mbox{.}  
\end{equation}%
Then, by just making $z=z_{0}$ in (2.3), we have 
\begin{equation}\label{2.4}
\lim_{n\rightarrow \infty }f(z_{0}+iT_{n})=0\mbox{.}  
\end{equation}%
We now consider the sequence of analytic functions on $S_{a,b}$ 
\begin{equation}\label{2.5}
g_{n}(z):=f(z+iT_{n})\mbox{, }z\in S_{a,b}\mbox{, }n=1,2,...\mbox{, } 
\end{equation}%
which are uniformly bounded on the compacts of $S_{a,b}$ by virtue of the
boundedness of $f(z)$ on any substrip of $S_{a,b}$. Hence, by Montel's
theorem $\left[ 1\mbox{, p. 165}\right] $, there exists a uniformly
convergent subsequence $\left\{ g_{n_{k}}(z):k=1,2,\right\} $ to an analytic
function, say $g(z)$, on $S_{a,b}$. The function $g(z)$, by (2.1), is not
identically $0$ on $S_{a,b}$ and, because of (2.5) and (2.4), has a zero at
the point $z_{0}$ which, by analyticity, is not isolated. Then we can
determine a positive number 
\[
r<\min \left\{ x_{0}-a\mbox{, }b-x_{0}\mbox{, }\epsilon \mbox{, }\frac{1}{2}%
\right\} 
\]%
such that $\left\vert g(z)\right\vert \neq 0$ for all $z$ on the circle $%
\left\vert z-z_{0}\right\vert =r$, so the number 
\[
m:=\min \left\{ \left\vert g(z)\right\vert :\left\vert z-z_{0}\right\vert
=r\right\} >0\mbox{.} 
\]%
Noticing $\left\vert z-z_{0}\right\vert =r$ is a compact set in the strip $%
S_{a,b}$ and $\lim_{k\rightarrow \infty }g_{n_{k}}(z)=g(z)$ is uniform on
the compacts of $S_{a,b}$, there exists a positive integer $k_{0}$ such that 
\[
\left\vert g(z)-g_{n_{k}}(z)\right\vert <m\leq \left\vert g(z)\right\vert 
\mbox{ for all }\left\vert z-z_{0}\right\vert =r\mbox{ and for all }k>k_{0}%
\mbox{.} 
\]%
This means, by Rouch\'{e}'s theorem $\left[ 1\mbox{, p. 88}\right] $, that
every function $g_{n_{k}}(z)$, for all $k>k_{0}$, has in the disk $%
\left\vert z-z_{0}\right\vert <r$ as many zeros as $g(z)$ does, so at least
one, say $z_{n_{k}}$. Then, from (2.5), one has 
\[
f(z_{n_{k}}+iT_{n_{k}})=0\mbox{ for all }k>k_{0}\mbox{.} 
\]%
Finally, since $\left\vert z_{n_{k}}-z_{0}\right\vert <r<$ $\frac{1}{2}$
for all $k>k_{0}$ and $T_{n_{k+1}}-T_{n_{k}}>1$ for all $k$, the points $%
z_{n_{k}}+iT_{n_{k}}$ are distinct for all $k>k_{0}$. Hence, the function $%
f(z)$ has a zero at each point $z_{n_{k}}+iT_{n_{k}}$ for any $k>k_{0}$.
Consequently, $f(z)$ has infinitely many zeros in the strip $S_{\epsilon }$.
This completes the proof.
\end{proof}

Since the partial sums of the Riemann zeta function are almost-periodic
functions, from the previous lemma, we deduce a relevant property on the
distribution of their zeros with respect to the imaginary axis.

\begin{theorem}
Every partial sum of the Riemann zeta function, $\zeta
_{n}(z):=\sum_{k=1}^{n}\frac{1}{k^{z}}$, $n\geq 2$, possesses infinitely
many zeros in each half-plane $\left\{ z:\Re z<0\right\} $ and $\left\{
z:\Re z>0\right\} $ except for $n=2$ whose zeros are all on the imaginary
axis.
\end{theorem}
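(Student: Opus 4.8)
The plan is to settle $n=2$ outright and then, for each $n\ge 3$, to produce one zero of $\zeta_n$ in each open half-plane and to promote it to infinitely many via Lemma 1. For $n=2$ the zeros are exactly the points $\pi(2k+1)i/\log 2$ of (1.6), hence all on the imaginary axis, and nothing more is needed. Fix $n\ge 3$. Since $\sum_{k=2}^{n}k^{-\sigma}$ is continuous and strictly decreasing, equals $n-1\ge 2$ at $\sigma=0$, and tends to $0$ as $\sigma\to\infty$, there is a unique $\sigma^{*}=\sigma^{*}(n)>0$ with $\sum_{k=2}^{n}k^{-\sigma^{*}}=1$; for $\Re z>\sigma^{*}$ one has $|\zeta_n(z)|\ge 1-\sum_{k=2}^{n}k^{-\Re z}>0$, so every zero of $\zeta_n$ has $\Re z\le\sigma^{*}$, and symmetrically there is a unique $\sigma_{*}=\sigma_{*}(n)<0$ below which $\zeta_n$ does not vanish. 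Writing $K_{x}:=\overline{\{\zeta_n(x+iy):y\in\mathbb{R}\}}$, I would proceed in three steps: (i) find $x^{+}\in(0,\sigma^{*}]$ and $x^{-}\in[\sigma_{*},0)$ with $0\in K_{x^{+}}$ and $0\in K_{x^{-}}$; (ii) deduce from each such $x$ an actual zero of $\zeta_n$ with $\Re z$ arbitrarily close to $x$; (iii) amplify one zero in each open half-plane to infinitely many by Lemma 1.

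Step (ii) is a normal-families argument. Suppose, for a contradiction, that $\zeta_n$ has no zero in the open strip $\{|\Re z-x|<\eta\}$ for some $\eta>0$; then $\zeta_n$ does not vanish on the line $\Re z=x$ either, so $0\in K_{x}$ yields a sequence $y_j\to\infty$ with $\zeta_n(x+iy_j)\to 0$. The translates $g_j(z):=\zeta_n(z+iy_j)$ are holomorphic, uniformly bounded on that strip (each $\zeta_n$ is bounded on every vertical strip of finite width), and zero-free on it, so by Montel a subsequence tends locally uniformly to a holomorphic $h$, which by Hurwitz is identically $0$ or zero-free. Passing to a further subsequence along which $e^{-iy_j\log k}\to\beta_k$ for $k=1,\dots,n$ gives $h(z)=\sum_{k=1}^{n}\beta_k k^{-z}$ with $|\beta_k|=1$ and $\beta_1=1$, a non-constant Dirichlet polynomial, so $h\not\equiv 0$; but $h(x)=\lim\zeta_n(x+iy_j)=0$, a contradiction. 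Hence $\zeta_n$ has a zero with $\Re z\in(x-\eta,x+\eta)$ for every $\eta>0$, and choosing $\eta<|x|$ gives a zero $z_0$ whose real part has the sign of $x$; in particular $b^{(n)}>0$ and $a^{(n)}<0$. For step (iii), if $\Re z_0>0$ I would take a vertical strip $S_{a,b}$ with $0<a<\Re z_0<b$, on which $\zeta_n$ is almost periodic and not identically $0$, and apply Lemma 1 with some $\epsilon<\Re z_0$ to obtain infinitely many zeros in $\{|\Re z-\Re z_0|<\epsilon\}\subset\{\Re z>0\}$; the negative zero is handled the same way. This finishes the case $n\ge 3$.

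The main obstacle is step (i). By unique factorization $\log 1,\dots,\log n$ are linearly independent over $\mathbb{Q}$, so by Kronecker's theorem — equivalently by Bohr's equivalence theorem, already used in the Introduction — $K_{x}$ is exactly the range of the continuous torus map that assigns an angle $\psi_p$ to each prime $p\le n$ and returns $\sum_{k=1}^{n}k^{-x}\exp\!\left(i\sum_{p}v_p(k)\psi_p\right)$, where $v_p(k)$ is the exponent of $p$ in $k$. Thus $0\in K_{x}$ asks whether a polygon with side-lengths $k^{-x}$, $1\le k\le n$, can be closed using only the \emph{multiplicative} exterior angles $\phi_k=\sum_p v_p(k)\psi_p$; with arbitrary angles this is possible exactly for $\sigma_{*}\le x\le\sigma^{*}$, so everything hinges on the multiplicative restriction costing nothing for at least one $x$ of each sign. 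I would attack this by peeling off the largest prime $p^{*}$: by Bertrand's postulate $p^{*}\in(n/2,n]$, so $k=p^{*}$ is its only multiple in $\{1,\dots,n\}$ and its angle $\psi_{p^{*}}$ is free, whence $0\in K_{x}$ reduces to $(p^{*})^{-x}$ lying in the range of the modulus of $\sum_{k\ne p^{*}}k^{-x}e^{i\phi_k}$; since that range is an interval with right endpoint $\zeta_n(x)-(p^{*})^{-x}$ and $2(p^{*})^{-x}<\zeta_n(x)$ for $0<x\le\sigma^{*}$, it remains only to push its left endpoint below $(p^{*})^{-x}$, which I would iterate by peeling the next prime, the recursion resting on the two-prime case (available since $n\ge 3$ forces $2$ and $3$ to occur) together with the continuity of $x\mapsto K_{x}$ to move $x$ slightly off $0$ in either direction. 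Making this sweeping-out precise for an $x$ of each sign — in effect, showing that $0$ is an interior point of $K_{0}$ and that this persists under a small change of $x$ — is where I expect the technical weight to fall. (When $n$ is large enough for (1.3) and (1.5) to apply, step (i) is unnecessary: (1.3) gives $b^{(n)}>1>0$ and (1.5) gives $a^{(n)}<0$; the genuine content of the theorem is the uniformity down to $n=3$.)
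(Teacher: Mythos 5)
Your steps (ii) and (iii) are sound. Step (ii) is a clean normal-families argument showing that for an exponential polynomial, $\inf_y|\zeta_n(x+iy)|=0$ on a line free of zeros forces zeros in every neighbouring strip; this is essentially the mechanism that also powers Lemma 1, and Hurwitz is applied correctly. Step (iii) is a routine invocation of Lemma 1 once you have one zero in each open half-plane. The framing of step (i) is also correct: by Kronecker's theorem, $K_x$ is exactly the image of the torus map you describe, and the crude necessary condition for $0\in K_x$ (the ``polygon'' inequality with free angles) is equivalent to $\sigma_*\le x\le\sigma^*$.

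The difficulty is that you never actually prove step (i), and step (i) \emph{is} the theorem. You reduce the problem to ``the multiplicative restriction on the angles costs nothing for at least one $x$ of each sign,'' propose a prime-peeling recursion, and then write that making this precise ``is where I expect the technical weight to fall.'' That is an acknowledged gap, not a proof. Worse, the recursion is not merely technically heavy but genuinely delicate at its base: after peeling off the isolated primes (those $p$ with $2p>n$, whose angles are free by Bertrand), you are left with, e.g., $\left|1+2^{-x}e^{-i\psi}+4^{-x}e^{-2i\psi}+\cdots\right|$, whose range over $\psi$ is some interval that is \emph{not} the obvious $\left[\left|1-\sum\right|,1+\sum\right]$ because the angles of powers of the same prime are rigidly coupled; you have no estimate of its left endpoint, and the claim that $2(p^*)^{-x}<\zeta_n(x)$ handles only the right endpoint. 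So the plan as written would not compile into a proof, even for a single $x$, let alone for an $x$ of each sign and uniformly in $n\ge 3$ (your own aside concedes that (1.3) and (1.5) cover only large $n$, so the small-$n$ uniformity is precisely the missing content; indeed $0\in R_n$ for all $n$ is the Sepulcre–Vidal conjecture, proved in this paper only as Corollary 21 downstream of the present theorem).

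The paper proves the theorem by a completely different route, which avoids your step (i) altogether. Working with $G_n(z)=\zeta_n(-z)$, it cites $\left[13\right]$ for the fact that $Z_{G_n}$ is not contained in the imaginary axis for $n>2$, and then rules out the configuration ``finitely many off-axis zeros plus infinitely many purely imaginary zeros'' by Hadamard factorization: it forms $H_m=G_m/P$ with $P$ the polynomial vanishing at the hypothetical off-axis zeros, uses the evenness of $H_m(z)e^{-Az}$ to obtain $P(-z)/P(z)=G_m(-z)e^{2Az}/G_m(z)$, and derives a contradiction either from the growth as $z\to+\infty$ along $\mathbb R$ (when $P$ is even) or from Shields's theorem on quotients of exponential polynomials (when $P(-z)/P(z)$ has a pole). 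That forces infinitely many off-axis zeros; Ritt's trace formula $\sum_l\Re z_{n,l}=O(1)$ together with Lemma 1 then forbids them from all lying on one side, yielding zeros of both signs. You would need to either carry out the Kronecker/peeling analysis rigorously or replace step (i) by an argument of comparable force; as it stands, the proposal is incomplete at its core.
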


\begin{proof}
For any $n\geq 2$, by regarding the functions 
\[
G_{n}(z):=\zeta _{n}(-z)\mbox{, }n\geq 2\mbox{, } 
\]%
the sets of zeros of $G_{n}(z)$ and $\zeta _{n}(z)$, denoted by $%
Z_{G_{n}(z)} $ and $Z_{\zeta _{n}(z)}$ respectively, are related by $%
Z_{G_{n}(z)}=-Z_{\zeta _{n}(z)}$. Then the bounds 
\[
a_{n}:=\inf \left\{ \Re z:G_{n}(z)=0\right\} \mbox{, }b_{n}:=\sup \left\{
\Re z:G_{n}(z)=0\right\} 
\]%
and 
\[
a^{(n)}:=\inf \left\{ \Re z:\zeta _{n}(z)=0\right\} \mbox{, }b^{(n)}:=\sup
\left\{ \Re z:\zeta _{n}(z)=0\right\} \mbox{,} 
\]%
satisfy 
\begin{equation}\label{2.6}
a^{(n)}=-b_{n}\mbox{, }b^{(n)}=-a_{n}\mbox{ for all }n\geq 2\mbox{.} 
\end{equation}%
Concerning the sets $Z_{G_{n}(z)}$ it is needed to remind that in $\left[ 13%
\mbox{, Propositions 1,2,3}\right] $ was proved:

a) For any $n\geq 2$, $Z_{G_{n}(z)}$ is infinite.

b) For every $n\geq 2$, there exist real numbers $r_{n}$, $s_{n}$, such that 
$Z_{G_{n}(z)}$ is contained in the vertical strip $\left\{ z\in 
\mathbb{C}
:r_{n}\leq \Re z\leq s_{n}\right\} $.

c) For any $n\geq 2$, $Z_{G_{n}(z)}$ is not contained in the imaginary axis,
except for $n=2$.

As we have seen in (1.6) the zeros of $\zeta _{2}(z)$ are all imaginary,
then the last part of the theorem follows. Hence, from now on, we assume
that $n>2$. Let suppose that there exists an integer $m>2$ such that the
corresponding function $G_{m}(z)$ has its zeros, say $(\alpha _{m,l})_{l\geq
1}$, of the form \ 
\begin{equation}\label{2.7}
\Re \alpha _{m,l}\neq 0\mbox{ for }l=1,...,p\mbox{, }\Re \alpha _{m,l}=0%
\mbox{ if }l>p\mbox{, for some integer }p\geq 1\mbox{.}  
\end{equation}%
Noticing $G_{n}(\overline{z})=\overline{G_{n}(z)}$ for all $z\in 
\mathbb{C}
$, $(\alpha _{m,l})_{l\geq 1}$ are conjugate. Then $p$ is necessarily even,
so $p=2q\geq 2$, and the zeros $\alpha _{m,l}$ with $l>p$ are of the form $%
\pm iy_{j}$ with $y_{j}>0$. Let denote by $P(z)$ the polynomial with leader
coefficient $1$ and zeros $(\alpha _{m,l})_{l=1,2,...p}$. Then 
\begin{equation}\label{2.8}
P(z)=(z^{2}-2a_{1}z+\left\vert \alpha _{m,1}\right\vert ^{2})\cdot \cdot
\cdot (z^{2}-2a_{q}z+\left\vert \alpha _{m,q}\right\vert ^{2})\mbox{,} 
\end{equation}%
where $a_{l}:=\Re \alpha _{m,l}$, $l=1,...,q$. Therefore the function 
\[
H_{m}(z):=\frac{G_{m}(z)}{P(z)} 
\]%
is entire of order $1$ with zeros $(\pm iy_{j})_{j=1,2,...}$ . By Hadamard's
factorization theorem $\left[ 1\mbox{, Th. 4.4.3}\right] $ we can write 
\begin{equation}\label{2.9}
H_{m}(z)=e^{Az+B}\prod_{j=1}^{\infty }\left( 1+\frac{z^{2}}{y_{j}^{2}}%
\right) \mbox{,}  
\end{equation}%
where the constants $B$ and $A$, because of (2.8), satisfy 
\[
e^{B}=H_{m}(0)=\frac{m}{\left\vert \alpha _{m,1}...\alpha _{m,q}\right\vert
^{2}}\mbox{,} 
\]%
\[
A=\frac{H_{m}^{\prime }(0)}{e^{B}}=\frac{\log (m!)}{m}+2\left( \frac{a_{1}}{%
\left\vert \alpha _{m,1}\right\vert ^{2}}+...+\frac{a_{q}}{\left\vert \alpha
_{m,q}\right\vert ^{2}}\right) \mbox{.} 
\]%
By (2.9) the function $H_{m}(z)e^{-Az}$ is even, then 
\[
H_{m}(z)e^{-Az}=H_{m}(-z)e^{Az}\mbox{ for all }z\in 
\mathbb{C}
\mbox{.} 
\]%
Now, substituting $H_{m}(z)$ by $\frac{G_{m}(z)}{P(z)}$, we get 
\begin{equation}\label{2.10}
\frac{P(-z)}{P(z)}=\frac{G_{m}(-z)e^{2Az}}{G_{m}(z)}\mbox{ for all }z\in 
\mathbb{C}
\mbox{.}  
\end{equation}%
If we assume that any zero of $P(z)$ is a zero of $P(-z)$, the polynomials $%
P(z)$ and $P(-z)$ are equal and their zeros $(\alpha _{m,l})_{l=1,2,...p}$
are conjugate and opposite. Hence $q=\frac{p}{2}$ is even, 
\[
\frac{a_{1}}{\left\vert \alpha _{m,1}\right\vert ^{2}}+...+\frac{a_{q}}{%
\left\vert \alpha _{m,q}\right\vert ^{2}}=0 
\]%
and then $A=\frac{\log (m!)}{m}$. But this leads us to the following
contradiction: noticing $(n!)^{2}>n^{n}$ for all $n>2$,\ the right side of
(2.10) tends to $+\infty $ as $z\in 
\mathbb{R}
$ does, whereas its left side is identically $1$. Consequently, at least\
one zero of $P(z)$ cannot be a zero of $P(-z)$. Thus the left side of (2.10)
is a meromorphic function having at least a pole. On the other hand, the
right side of (2.10) is a quotient of exponential polynomials with a finite
number of poles (the zeros of $P(z)$ that are not cancelled with the zeros
of $P(-z)$). Then Shields's theorem $\left[ 21\right] $ asserts that $\frac{%
G_{m}(-z)e^{2Az}}{G_{m}(z)}$ is an exponential polynomial, which represents
a new contradiction because $\frac{P(-z)}{P(z)}$ has at least a pole. As
consequence, there is no function $G_{m}(z)$, $m>2$, having its zeros of the
form (2.7). Therefore, taking into account c), every function $G_{n}(z)$, $%
n>2$, has infinitely many zeros in at least one of the half-planes $\left\{
z:\Re z<0\right\} $ and $\left\{ z:\Re z>0\right\} $. However $G_{n}(z)$, $%
n>2$, cannot have all its zeros, say $(z_{n,l})_{l=1,2,...}$, with real part
of the same sign. Indeed, by expressing $G_{n}(z)=1+2^{z}+...+n^{z}$ of the
form 
\[
1+e^{z\log 2}+...+e^{z\log n}\mbox{,} 
\]%
by Ritt's formula $\left[ 19,\mbox{ formula (9)}\right] $, we have 
\begin{equation}\label{2.11}
\sum_{l=1}^{\infty }\Re z_{n,l}=O(1)\mbox{.}  
\end{equation}%
Now, by assuming, for instance, $\Re z_{n,l}\geq 0$ for all $l\geq 1$, from
c), there exists at least a zero of $G_{n}(z)$, say $z_{n,l_{0}}$, such that 
$\Re z_{n,l_{0}}>0$. By taking $\epsilon =\Re z_{n,l_{0}}$ and noting that $%
G_{n}(z)$ is an almost-periodic function, because of Lemma 1, $G_{n}(z)$ has
infinitely many zeros in the strip 
\[
S_{\frac{\epsilon }{2}}:=\left\{ z:\frac{\epsilon }{2}<\Re z<\frac{3\epsilon 
}{2}\right\} \mbox{.} 
\]%
This implies that%
\[
\sum_{l=1}^{\infty }\Re z_{n,l}=+\infty \mbox{,} 
\]%
contradicting (2.11). Hence, $G_{n}(z)$ has at least a zero in $\left\{
z:\Re z<0\right\} $ and again by using Lemma 1, $G_{n}(z)$ has infinitely
many zeros with negative real part. Then the theorem follows for the
functions $G_{n}(z)$ and, taking into account (2.6), it is also true for the
partial sums $\zeta _{n}(z)$.
\end{proof}

The next result is an easy consequence of the above theorem.

\begin{corollary}
The bounds $a_{n}$, $b_{n}$ and $a^{(n)}$, $b^{(n)}$ corresponding to the
functions $G_{n}(z)$ and $\zeta _{n}(z)$, respectively, satisfy 
\[
a_{n}=b_{n}=a^{(n)}=b^{(n)}=0\mbox{ if }n=2 
\]%
and 
\[
a_{n}<0\mbox{, }b_{n}>0\mbox{, }a^{(n)}<0\mbox{, }b^{(n)}>0\mbox{ for all }%
n>2\mbox{.} 
\]
\end{corollary}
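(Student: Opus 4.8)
The plan is to read everything off from Theorem 2 together with the symmetry relation (2.6), so the argument should be short. First I would dispose of the case $n=2$: by (1.6) all zeros of $\zeta _{2}(z)=1+2^{-z}$ lie on the imaginary axis, hence every such zero has $\Re z=0$ and therefore $a^{(2)}=b^{(2)}=0$; substituting into (2.6) gives $b_{2}=-a^{(2)}=0$ and $a_{2}=-b^{(2)}=0$, which settles the first displayed equality. Equivalently one can argue directly with $G_{2}(z)=\zeta _{2}(-z)=1+2^{z}$, whose zeros solve $z\log 2=\pi (2k+1)i$ and are thus purely imaginary.

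For $n>2$ I would first note that the suprema and infima in the statement are attained as finite real numbers: property b) recalled in the proof of Theorem 2 (from $\left[ 13\mbox{, Propositions 1,2,3}\right] $) confines $Z_{G_{n}(z)}$, and hence $Z_{\zeta _{n}(z)}=-Z_{G_{n}(z)}$, to a fixed vertical strip, so $a_{n},b_{n},a^{(n)},b^{(n)}$ are all real and the claimed strict inequalities are meaningful. Then Theorem 2 supplies, for each $n>2$, a zero $z_{+}$ of $\zeta _{n}(z)$ with $\Re z_{+}>0$ and a zero $z_{-}$ with $\Re z_{-}<0$ (in fact infinitely many of each, but one of each suffices here). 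By the very definition of the bounds, $b^{(n)}=\sup \left\{ \Re z:\zeta _{n}(z)=0\right\} \geq \Re z_{+}>0$ and $a^{(n)}=\inf \left\{ \Re z:\zeta _{n}(z)=0\right\} \leq \Re z_{-}<0$. Applying (2.6) once more yields $b_{n}=-a^{(n)}>0$ and $a_{n}=-b^{(n)}<0$, completing the proof.

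There is no genuine obstacle here; the two points deserving a word of care are (i) checking finiteness of the bounds, so that ``$b^{(n)}>0$'' is an honest inequality and not a vacuous statement about $+\infty$ — handled by the strip containment just mentioned — and (ii) using (2.6) in the correct direction, since it relates the bounds of $\zeta _{n}$ to those of $G_{n}$ with both a sign flip and an interchange of $\sup $ and $\inf $. Everything else is immediate from Theorem 2.
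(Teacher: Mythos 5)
Your argument is exactly what the paper intends: the corollary is stated without proof as "an easy consequence of the above theorem," and your deduction — $n=2$ from (1.6), $n>2$ from Theorem 2 plus the definitions of the bounds and the symmetry (2.6) — is the obvious route. The side remarks about finiteness (via the strip containment from property b)) and the direction of (2.6) are correct and harmless.
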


\section{The level curves $\left\vert G_{n}^{\ast }(z)\right\vert
=p_{k_{n}}^{x_{0}}$, $n>2$, $x_{0}\in 
\mathbb{R}
$}

The curves $\left\vert f(z)\right\vert =$constant, where $f(z)$ is an
analytic function on a given region of the complex plane, are defined as
level curves (see for instance Titchmarsh's book $\left[ 25\mbox{, }p.121%
\right] $). This concept will be crucial for attaining the main objective of
our paper, namely, to prove the existence of non-degenerate intervals
contained in the sets 
\begin{equation}\label{3.1}
R_{n}:=\overline{\left\{ \Re z:G_{n}(z)=0\right\} }  
\end{equation}%
and%
\begin{equation}\label{3.2}
R^{(n)}:=\overline{\left\{ \Re z:\zeta _{n}(z)=0\right\} }\mbox{,}  
\end{equation}%
for all integer $n>2$.

From the definition of the bounds corresponding to the functions $G_{n}(z)$, 
$\zeta _{n}(z)$, and (3.1), (3.2), it follows 
\[
a_{n}\mbox{, }b_{n}\in R_{n}\subset \left[ a_{n},b_{n}\right] 
\]%
and 
\[
a^{(n)}\mbox{, }b^{(n)}\in R^{(n)}\subset \left[ a^{(n)},b^{(n)}\right] 
\mbox{, } 
\]%
for all $n\geq 2$. By Corollary 3, the set $R_{n}=R^{(n)}=\left\{ 0\right\} $
if and only if $n=2$. Therefore $n=2$ is the trivial case and whether $n>2$,
the sets $R_{n}$, $R^{(n)}$ have at least two distinct points, namely, the
bounds $a_{n}$, $b_{n}$ and $a^{(n)}$, $b^{(n)}$, respectively.

For every $n>2$ we define the function 
\[
G_{n}^{\ast }(z):=G_{n}(z)-p_{k_{n}}^{z}\mbox{,} 
\]%
where $G_{n}(z):=1+2^{z}+...+n^{z}$ and $p_{k_{n}}$ is the last prime not
exceeding $n$. The goal is to study properties of the level curves 
\begin{equation}\label{3.3}
\left\vert G_{n}^{\ast }(z)\right\vert =p_{k_{n}}^{x_{0}}\mbox{,}  
\end{equation}%
for each integer $n>2$ and a given real number $x_{0}$.

In order to emphasize the importance of the level curves, we recall that\
the sets $R_{n}$, defined in (3.1), were characterized in $\left[ 6\right] $
in terms of such curves as follows.

\begin{theorem}
(Dubon, Mora, Sepulcre, Ubeda, Vidal $\left[ 6\mbox{, Th. 2}\right] $) 
\textit{A real number }$x_{0\mbox{ }}$belongs to\textit{\ }$R_{n}$, $n>2$,%
\textit{\ if and only if the vertical line }$x=x_{0}$ \textit{intersects the
level curve }$\left\vert G_{n}^{\ast }(z)\right\vert =p_{k_{n}}^{x_{0}}$%
\textit{.}
\end{theorem}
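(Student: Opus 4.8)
The plan is to prove the two implications separately, using the fact that $G_n^\ast(z):=G_n(z)-p_{k_n}^z$ differs from $G_n(z)$ by exactly one exponential term, namely $p_{k_n}^z=e^{z\log p_{k_n}}$, and that on the vertical line $x=x_0$ one has $|p_{k_n}^z|=p_{k_n}^{x_0}$ constant. First I would establish the easy direction: suppose $x_0\in R_n$, so there is a sequence of zeros $z_j=x_j+iy_j$ of $G_n$ with $x_j\to x_0$. At a zero of $G_n$ we have $G_n^\ast(z_j)=G_n(z_j)-p_{k_n}^{z_j}=-p_{k_n}^{z_j}$, hence $|G_n^\ast(z_j)|=p_{k_n}^{x_j}$. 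These points lie on the level curves $|G_n^\ast(z)|=p_{k_n}^{x_j}$, which are ``close'' to the target curve $|G_n^\ast(z)|=p_{k_n}^{x_0}$ since $p_{k_n}^{x_j}\to p_{k_n}^{x_0}$. A limiting/continuity argument — together with the observation that $G_n^\ast$ is a nonconstant exponential polynomial, so its modulus is a nonconstant real-analytic function whose level sets vary continuously — then forces the line $x=x_0$ to meet the curve $|G_n^\ast(z)|=p_{k_n}^{x_0}$. The cleanest way is to note that $\varphi(y):=|G_n^\ast(x_0+iy)|$ is an almost-periodic function of $y\in\mathbb R$, so its range is the same as the closure of $\{|G_n^\ast(x_j+iy)| : y\in\mathbb R\}$-type values, and then show $p_{k_n}^{x_0}$ lies in that range.

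For the converse — the substantive direction — suppose the line $x=x_0$ meets the level curve, i.e. there exists $y_0$ with $|G_n^\ast(x_0+iy_0)|=p_{k_n}^{x_0}=|p_{k_n}^{x_0+iy_0}\cdot p_{k_n}^{-iy_0}|$. Write $G_n^\ast(x_0+iy_0)=p_{k_n}^{x_0}e^{i\theta}$ for some real $\theta$. The idea is to exploit the Bohr/Kronecker structure of exponential polynomials: the ``last prime'' frequency $\log p_{k_n}$ is linearly independent over $\mathbb Q$ from the remaining frequencies $\{\log k : k\le n,\ k\neq p_{k_n}\}$ (because $p_{k_n}$ is prime and does not divide any $k\le n$ other than itself, by Bertrand/the choice of $k_n$). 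Hence by Kronecker's theorem one can adjust the phase of the term $p_{k_n}^z=e^{(x_0+iy)\log p_{k_n}}$ essentially independently of $G_n^\ast$: there is a sequence $y_m\to\infty$ along which $G_n^\ast(x_0+iy_m)$ is arbitrarily close to $G_n^\ast(x_0+iy_0)=p_{k_n}^{x_0}e^{i\theta}$ while simultaneously $p_{k_n}^{x_0+iy_m}$ is arbitrarily close to $-G_n^\ast(x_0+iy_0)$, i.e. to $-p_{k_n}^{x_0}e^{i\theta}$. Then $G_n(x_0+iy_m)=G_n^\ast(x_0+iy_m)+p_{k_n}^{x_0+iy_m}\approx 0$, so $G_n$ has zeros with real part arbitrarily close to $x_0$; combined with Hurwitz's theorem (or a Rouché argument, as in the proof of Lemma 1) this yields genuine zeros $z$ of $G_n$ with $\Re z$ as close to $x_0$ as desired, so $x_0\in R_n$.

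The key steps, in order, are: (1) compute $|G_n^\ast|$ at a zero of $G_n$ and record the almost-periodicity of $y\mapsto G_n^\ast(x_0+iy)$; (2) for the forward implication, push the level-curve membership to the limit $x_j\to x_0$ using continuity of the modulus in the horizontal variable and almost-periodicity in the vertical one; (3) for the converse, verify the linear-independence-over-$\mathbb Q$ of $\log p_{k_n}$ against the other logarithms $\log k$ and invoke Kronecker's theorem to decouple the phase of the removed term; (4) turn the approximate equation $G_n(x_0+iy_m)\to 0$ into actual nearby zeros via Hurwitz/Rouché. The main obstacle is step (3): one must be careful that the Kronecker argument really allows the phase of $p_{k_n}^{iy}$ to be steered to the required value $e^{i(\theta+\pi)}$ while the phases $\{k^{iy}: k\neq p_{k_n}\}$ simultaneously return close to their values at $y_0$ — this is exactly where the primality of $p_{k_n}$ (hence $\log p_{k_n}\notin \operatorname{span}_{\mathbb Q}\{\log k : 2\le k\le n,\ k\neq p_{k_n}\}$) is essential, and the quantitative ``simultaneously'' requires the joint equidistribution of $(y\log 2,\dots,y\log n)/2\pi$ on the appropriate subtorus.
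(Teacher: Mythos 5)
This statement is quoted in the paper as Theorem 2 of the reference $[6]$ (Dubon, Mora, Sepulcre, Ubeda, Vidal); the paper itself gives no proof, so there is no in-paper argument to compare against. Judged on its own, your proposal gets the \emph{converse} implication essentially right in spirit: the rational independence of $\log p_{k_n}$ from $\{\log k:2\le k\le n,\ k\neq p_{k_n}\}$ (since $p_{k_n}>n/2$ by Bertrand, $p_{k_n}$ divides no other $k\le n$), plus Kronecker's theorem, lets you steer the phase of $p_{k_n}^{iy}$ to cancel $G_n^\ast(x_0+iy)$ while keeping all other phases near their values at $y_0$, producing $G_n(x_0+iy_m)\to 0$; a Bochner--Montel compactness plus Rouch\'e argument, exactly as in Lemma~1 of the paper, then converts these approximate zeros into actual zeros of $G_n$ with real parts tending to $x_0$.

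The \emph{forward} implication, however, contains a genuine gap. If $x_0\in R_n$ but no zero of $G_n$ has real part exactly $x_0$, you only obtain zeros $z_j=x_j+iy_j$ with $x_j\to x_0$ and $\lvert G_n^\ast(z_j)\rvert = p_{k_n}^{x_j}$; from this one can show $\inf_y\lvert G_n^\ast(x_0+iy)\rvert \le p_{k_n}^{x_0}$ by continuity of $x\mapsto\inf_y\lvert G_n^\ast(x+iy)\rvert$, and $\sup_y\lvert G_n^\ast(x_0+iy)\rvert = G_n^\ast(x_0)\ge p_{k_n}^{x_0}$ (the latter is precisely the content of the paper's Theorem~5, $A_n(x,0)\ge 0$ on $[a_n,b_n]$, which you never invoke but should). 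But this puts $p_{k_n}^{x_0}$ only in the \emph{closure} of the range of $y\mapsto\lvert G_n^\ast(x_0+iy)\rvert$, not necessarily in the range itself. If $\inf_y\lvert G_n^\ast(x_0+iy)\rvert$ equals $p_{k_n}^{x_0}$ without being attained --- a real possibility for an almost-periodic modulus with incommensurable frequencies, since such a trigonometric polynomial need not attain its infimum on $\mathbb{R}$ --- the intermediate-value argument breaks down and the line $x=x_0$ need not meet the level curve. Your ``limiting/continuity argument'' and the claim that ``level sets vary continuously'' are precisely where this attainment issue is hidden; as stated, they would only yield $R_n\subseteq\overline{\{x_0:\ x=x_0\text{ meets the curve}\}}$, which is weaker than the theorem. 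A correct proof must show this set is closed, or otherwise exclude the non-attainment case using the specific structure of the level curves (e.g.\ via the $x$-projection being the full interval $[b_{n,x_0}^-,b_{n,x_0}]$ as in Proposition~8, combined with continuity of $x_0\mapsto b_{n,x_0}^{\pm}$).
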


By defining the real functions

\[
A_{n}(x,y):=\left\vert G_{n}^{\ast }(x+iy)\right\vert -p_{k_{n}}^{x}\mbox{; }%
x,y\in 
\mathbb{R}
\mbox{, }n>2\mbox{,} 
\]%
the above result was completed in $\left[ 15\right] $ under the form:

\begin{theorem}
(Mora $\left[ 15\mbox{, Th. 2}\right] $) For every integer $n>2$, a real
number $x$ belongs to $R_{n}$ if and only if there exists\ some $y\in 
\mathbb{R}
$ such that $A_{n}(x,y)=0$. Furthermore, $A_{n}(x,0)\geq 0$ for all $x\in $ $%
\left[ a_{n},b_{n}\right] $.
\end{theorem}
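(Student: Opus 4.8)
The plan is to separate the statement into its two assertions and to observe that the first is essentially a re-reading of Theorem~4. A point of the vertical line $x=x_0$ has the form $x_0+iy$ with $y\in\mathbb{R}$, and it lies on the level curve $|G_n^*(z)|=p_{k_n}^{x_0}$ exactly when $|G_n^*(x_0+iy)|-p_{k_n}^{x_0}=0$, i.e.\ when $A_n(x_0,y)=0$. Hence the line meets the level curve if and only if $A_n(x_0,y)=0$ for some $y$, and combining this with the characterization of $R_n$ in Theorem~4 gives the claimed equivalence at no cost.

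The substance is the inequality $A_n(x,0)\ge 0$ for $x\in[a_n,b_n]$. Put $p:=p_{k_n}$. For real $x$ every term $k^x$ with $k\in\{1,\dots,n\}$ and $k\ne p$ is positive, so $G_n^*(x)=\sum_{1\le k\le n,\ k\ne p}k^x$ is real and positive and $A_n(x,0)=G_n^*(x)-p^x$; dividing by $p^x>0$, it suffices to show $f(x)\ge 1$ on $[a_n,b_n]$, where $f(x):=\sum_{1\le k\le n,\ k\ne p}(k/p)^x$. I would then split according to whether $n$ is prime. If $n$ is composite then $p<n$, so both $k=1$ and $k=n$ occur in the sum; since $1/p<1<n/p$ one gets $f(x)\ge p^{-x}\ge 1$ for $x\le 0$ and $f(x)\ge (n/p)^x\ge 1$ for $x\ge 0$, so $A_n(x,0)\ge 0$ for every real $x$, a fortiori on $[a_n,b_n]$.

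If $n$ is prime then $p=n$ and $f(x)=\sum_{k=1}^{n-1}(k/n)^x$, where every base satisfies $k/n<1$; hence $f'(x)=\sum_{k=1}^{n-1}(k/n)^x\log(k/n)<0$, so $f$ is strictly decreasing on $\mathbb{R}$ and therefore $f(x)\ge f(b_n)$ for all $x\le b_n$, in particular on $[a_n,b_n]$. It remains only to verify $f(b_n)\ge 1$, and this is where I would invoke the equivalence just proved (equivalently Theorem~4): since $b_n\in R_n$, there is some $y_0\in\mathbb{R}$ with $A_n(b_n,y_0)=0$, that is $|G_n^*(b_n+iy_0)|=p^{b_n}$; the triangle inequality then gives $G_n^*(b_n)=\sum_{k\ne p}k^{b_n}\ge|\sum_{k\ne p}k^{b_n+iy_0}|=|G_n^*(b_n+iy_0)|=p^{b_n}$, i.e.\ $f(b_n)\ge 1$. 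Combining the two cases yields $A_n(x,0)\ge 0$ throughout $[a_n,b_n]$.

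I expect the only delicate point to be the organization of the prime case: monotonicity of $f$ collapses the whole interval to the single endpoint $b_n$, and the bound $f(b_n)\ge 1$ must be supplied by the fact that $b_n$ is a limit of real parts of genuine zeros of $G_n$ (here packaged via $b_n\in R_n$ together with the level-curve characterization), since for $n$ prime the inequality $A_n(x,0)\ge 0$ genuinely fails for $x$ slightly larger than $b_n$; thus the restriction to $[a_n,b_n]$ is essential and the argument cannot be purely a local positivity estimate. Everything else reduces to elementary positivity and monotonicity of exponential sums.
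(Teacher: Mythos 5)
The paper does not actually prove this statement; it is imported verbatim as Theorem~5 with the citation ``(Mora [15, Th.~2])'', so there is no in-paper proof to compare against. That said, your argument is correct and self-contained, and it is worth recording how it is organized.

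Your reading of the first clause as a reformulation of Theorem~4 is exactly right: a point of the line $x=x_{0}$ lies on the level curve $\left\vert G_{n}^{\ast }(z)\right\vert =p_{k_{n}}^{x_{0}}$ precisely when $A_{n}(x_{0},y)=0$, so nothing new is being asserted there. For the inequality $A_{n}(x,0)\geq 0$ on $\left[ a_{n},b_{n}\right]$, the split on whether $n$ is prime is the essential structural observation. When $n$ is composite you get the stronger conclusion $A_{n}(x,0)\geq 0$ for all real $x$ from the two extreme terms $k=1$ and $k=n$ straddling $p_{k_{n}}$; when $n$ is prime all ratios $k/p_{k_{n}}$ are below~$1$, so $f(x)=\sum_{k<n}(k/n)^{x}$ is strictly decreasing and the whole interval collapses to the single endpoint value $f(b_{n})$, which you supply with the triangle inequality applied at a point of the level curve above $b_{n}$ (the existence of which is Theorem~4 together with $b_{n}\in R_{n}$). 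That chain of reasoning is sound. One small inaccuracy in your closing commentary: for $n$ prime, $A_{n}(x,0)\geq 0$ need not fail ``for $x$ slightly larger than $b_{n}$'' -- it fails precisely past the unique root $\beta _{n}$ of $f(x)=1$, and in general $b_{n}\leq \beta _{n}$ with possible strict inequality -- but this does not touch the proof itself, only the remark about necessity of the interval restriction.
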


By squaring (3.3) we obtain the cartesian equation of the level curve for a
given real number\ $x_{0}$, 
\[
1+2^{2x}+...+n^{2x}+2.1^{x}\sum_{m=2}^{n}m^{x}\cos \left( y\log (\frac{m}{1}%
)\right) +2.2^{x}\sum_{m=3}^{n}m^{x}\cos \left( y\log (\frac{m}{2})\right) + 
\]%
\begin{equation}\label{3.4}
+...+2(n-1)^{x}\sum_{m=n}^{n}m^{x}\cos \left( y\log (\frac{m}{n-1})\right)
=p_{k_{n}}^{2x_{0}}\mbox{, }  
\end{equation}%
where in the left side of (3.4) the integer variables do not take the value $%
p_{k_{n}}$.

Since, for any value of $y$, the limit of the left side of (3.4) is $+\infty 
$ when $x\rightarrow +\infty $ and the rigth side of (3.4) is fixed, the
range of $x$ is always upper bounded for every $x_{0}$. However, as the
limit of the left side of (3.4) is $1$ when $x\rightarrow -\infty $, if $%
x_{0}\neq 0$, the range of the variable $x$ must be lower bounded. Finally,
from equation (3.4), it is immediate that the level curve neither contains a
vertical segment nor has a vertical asymptote. Thus, fixed $n>2$ and $%
x_{0}\in 
\mathbb{R}
$, the variable $x$ in the equation (3.4) either runs on a finite interval 
\[
\left[ b_{n,x_{0}}^{-},b_{n,x_{0}}\right] \mbox{, if }x_{0}\neq 0\mbox{,} 
\]%
or on the infinite interval 
\[
\left( -\infty ,b_{n,x_{0}}\right] \mbox{, if }x_{0}=0\mbox{,} 
\]%
where the extremes $b_{n,x_{0}}^{-}$, $b_{n,x_{0}}$ are defined as follows.

\begin{definition}
Fixed an integer $n>2$ and a real number $x_{0}$, we define $b_{n,x_{0}}$,
called upper extreme associated to $x_{0}$, as the maximum value that we can
assign to the variable $x$ to have, for at least a value of $y$, a point $%
\left( b_{n,x_{0}},y\right) $ of the level curve $\left\vert G_{n}^{\ast
}(z)\right\vert =p_{k_{n}}^{x_{0}}$. When $x_{0}\neq 0$, we define $%
b_{n,x_{0}}^{-}$, called lower extreme associated to $x_{0}$, as the minimum
value that we can assign to the variable $x$ to have, for at least a value
of $y$, a point $\left( b_{n,x_{0}}^{-},y\right) $ of the level curve $%
\left\vert G_{n}^{\ast }(z)\right\vert =p_{k_{n}}^{x_{0}}$.
\end{definition}

To illustrate the above definition we determine in the next example the
points extremes corresponding to the level curve $\left\vert G_{n}^{\ast
}(z)\right\vert =p_{k_{n}}^{x_{0}}$ for $n=3$.

\begin{example}
The level curve $\left\vert G_{3}^{\ast }(z)\right\vert =p_{k_{3}}^{x_{0}}$, 
$x_{0}\in 
\mathbb{R}
$.
\end{example}

For $n=3$ one has $p_{k_{3}}=3$, then $G_{3}^{\ast
}(z):=G_{3}(z)-p_{k_{3}}^{z}=1+2^{z}$. Hence the level curve is $\left\vert
1+2^{z}\right\vert =3^{x_{0}}$. By squaring we obtain its cartesian
equation, 
\begin{equation}\label{3.5}
1+2^{2x}+2^{x+1}\cos (y\log 2)=3^{2x_{0}}\mbox{.}  
\end{equation}%
The lower extreme associated to $x_{0}\neq 0$ is 
\[
b_{3,x_{0}}^{-}=\left\{ 
\begin{array}{c}
\frac{\log (1-3^{x_{0}})}{\log 2}\mbox{ if }x_{0}<0 \\ 
\frac{\log (3^{x_{0}}-1)}{\log 2}\mbox{ if }x_{0}>0%
\end{array}%
\right. 
\]%
and the upper extreme associated to any $x_{0}$ is 
\[
b_{3,x_{0}}=\frac{\log (1+3^{x_{0}})}{\log 2}\mbox{.} 
\]%
If $x_{0}=0$, by dividing (3.5) by $2^{x+1}$, the equation of the level
curve is 
\[
2^{x-1}+\cos (y\log 2)=0 
\]%
and then $x$ runs on $\left( -\infty ,1\right] $.

Thus, for $x_{0}<0$, (3.5) represents an infinite family of closed curves
containing each of them one zero of $G_{3}^{\ast }(z)$, so the level curve
has infinitely many arc-connected components. For $x_{0}=0$, the above
equation represents an infinite family of open curves having horizontal
asymptotes oriented to $-\infty $ of equations $y=\left( 2k+1\right) \frac{%
\pi }{2\log 2}$, $k\in 
\mathbb{Z}
$, so the level curve has infinitely many arc-connected components. Finally,
for $x_{0}>0$, (3.5) is the equation of a unique open curve, that is the
level curve has only one arc-connected component. The curve meets to the
real axis at the point $b_{3,x_{0}}^{-}=\frac{\log (3^{x_{0}}-1)}{\log 2}$,
and the variable $y$ takes all real numbers.

The next result proves that the level curve $\left\vert G_{n}^{\ast
}(z)\right\vert =p_{k_{n}}^{x_{0}}$ for $n>3$ is essentially the same as the
case $n=3$.

\begin{proposition}
Fixed an integer $n>2$ and a real number $x_{0}$, let us consider the level
curve $\left\vert G_{n}^{\ast }(z)\right\vert =p_{k_{n}}^{x_{0}}$. Then, if $%
x_{0}<0$, the level curve has infinitely many arc-connected components which
are closed curves; if $x_{0}=0$, the level curve has infinitely many
arc-connected components which are open curves with horizontal asymptotes,
as $x\rightarrow -\infty $, of equations $y=\left( 2k+1\right) \frac{\pi }{%
2\log 2}$, $k\in 
\mathbb{Z}
$. Moreover, the level curve only meets to the real axis at a point, say $%
c_{n,x_{0}}$, if and only if $x_{0}>0$; in this case the lower extreme $%
b_{n,x_{0}}^{-}=$ $c_{n,x_{0}}$ and the level curve has only one
arc-connected component which is an open curve where the variable $y$ takes
all real values.
\end{proposition}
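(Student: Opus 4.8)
Fix $n>3$ (the case $n=3$ having been treated in the previous example), write $z=x+iy$, put $c:=p_{k_{n}}^{x_{0}}$, and set $u(x,y):=\log|G_{n}^{\ast}(x+iy)|$, which is subharmonic on $\mathbb{C}$ and harmonic off the zeros of $G_{n}^{\ast}$; the level curve is then $\{u=\log c\}$. Since $n>3$ forces $p_{k_{n}}\geq 3$, the monomial $2^{z}$ is never removed, so $G_{n}^{\ast}(z)=1+2^{z}+R_{n}(z)$ with $R_{n}(z)=\sum_{3\leq k\leq n,\,k\neq p_{k_{n}}}k^{z}$ and $|R_{n}(x+iy)|\leq(n-2)3^{x}=o(2^{x})$ uniformly in $y$ as $x\rightarrow-\infty$. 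Expanding $\log(1+w)$ near $w=0$, the first step is to establish the asymptotic estimate
\[
u(x,y)=2^{x}\cos(y\log 2)+O(3^{x})\mbox{ uniformly in }y\mbox{, as }x\rightarrow-\infty\mbox{,}
\]
the error being $o(2^{x})$. Independently, on the real axis $|G_{n}^{\ast}(x)|=G_{n}^{\ast}(x)=1+\sum_{2\leq k\leq n,\,k\neq p_{k_{n}}}k^{x}$ is strictly increasing and runs from $1$ to $+\infty$, while the triangle inequality gives $|G_{n}^{\ast}(x+iy)|\leq G_{n}^{\ast}(x)$ for all $x,y$. Hence $G_{n}^{\ast}(x)=c$ has a (necessarily unique) real solution if and only if $c>1$, i.e. if and only if $x_{0}>0$; and in that case, if $x<c_{n,x_{0}}$ then $|G_{n}^{\ast}(x+iy)|\leq G_{n}^{\ast}(x)<c$ for every $y$, so no point of the level curve has abscissa $<c_{n,x_{0}}$, whereas $(c_{n,x_{0}},0)$ lies on it: thus $b_{n,x_{0}}^{-}=c_{n,x_{0}}$. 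This disposes of the statements about the real axis.

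Next I would read the three regimes off the estimate and off the bound $|G_{n}^{\ast}(x+iy)|\geq m^{x}-(m-1)(m-1)^{x}\rightarrow+\infty$ as $x\rightarrow+\infty$ (uniformly in $y$), $m$ being the largest index $\leq n$ distinct from $p_{k_{n}}$. If $x_{0}<0$, then $c<1$; since $u\rightarrow 0>\log c$ uniformly as $x\rightarrow-\infty$, there is $x_{1}$ with $|G_{n}^{\ast}|>c$ on $\{x\leq x_{1}\}$, and similarly there is $x_{2}$ with $|G_{n}^{\ast}|>c$ on $\{x\geq x_{2}\}$, so the level curve lies in the vertical strip $\{x_{1}\leq x\leq x_{2}\}$. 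If $x_{0}=0$, then $c=1$; by the estimate, for all $x\leq x_{1}$ sufficiently negative the sign of $u(x,y)$ equals that of $\cos(y\log 2)$ except on arbitrarily thin $y$-neighbourhoods of the zeros of $\cos(y\log 2)$, so there $\{|G_{n}^{\ast}|<1\}$ is a disjoint union of ``fingers'', one around each interval on which $\cos(y\log 2)<0$, whose two boundary arcs tend, as $x\rightarrow-\infty$, to consecutive lines $y=(2k+1)\frac{\pi}{2\log 2}$, $k\in\mathbb{Z}$; these are the asserted horizontal asymptotes. If $x_{0}>0$, then $c>1$ and the estimate gives $u<\log c$ on the left half-plane $\{x<x_{1}\}$, while for every fixed $y$ the map $x\mapsto|G_{n}^{\ast}(x+iy)|$ runs continuously from $1<c$ up to $+\infty$ and hence takes the value $c$, so the level curve meets every horizontal line, i.e. the variable $y$ takes all real values.

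The global, topological conclusions I would derive from the subharmonicity of $u$. Since $u=\log|G_{n}^{\ast}|$ is subharmonic and $G_{n}^{\ast}$ is non-constant, the maximum principle rules out any compact connected component of the superlevel set $\{|G_{n}^{\ast}|\geq c\}$. For $x_{0}<0$: the whole real axis together with the half-planes $\{x\leq x_{1}\}$ and $\{x\geq x_{2}\}$ lies in $\{|G_{n}^{\ast}|>c\}$, and combining this with the previous item one gets that every connected component of $\{|G_{n}^{\ast}|\leq c\}$ is bounded, hence compact, i.e. a closed curve of the level set, and, being compact, contains only finitely many zeros of $G_{n}^{\ast}$. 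For $x_{0}=0$: the real axis lies in $\{|G_{n}^{\ast}|>1\}$, separating the upper from the lower fingers; each finger persists as a single component (it can neither split nor merge across the barrier regions where $\cos(y\log 2)>0$) and meets the zeros of $G_{n}^{\ast}$ in a bounded, hence finite, set. For $x_{0}>0$: $\{|G_{n}^{\ast}|<c\}$ contains the connected left half-plane $\{x<x_{1}\}$, and using that $\{|G_{n}^{\ast}|\geq c\}$ has only unbounded components and that the real half-line $(c_{n,x_{0}},+\infty)$ lies in it and joins the superlevel component containing $\{x\geq x_{2}\}$, one checks that no superlevel component can disconnect $\{|G_{n}^{\ast}|<c\}$; so this set is connected and its boundary is a single open (non-compact) arc on which $y$ takes all real values. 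Finally, $G_{n}^{\ast}$ is an exponential polynomial with at least the two distinct real frequencies $0$ and $\log 2$, so (arguing as for $G_{n}$ in Section 2, cf. $\left[ 13\right] $) it has infinitely many zeros, all in a vertical strip and with imaginary parts tending to $\pm\infty$; each such zero lies in $\{|G_{n}^{\ast}|<c\}$, and no component of that set contains more than finitely many of them, so there are infinitely many components — closed curves if $x_{0}<0$, open curves with the stated asymptotes if $x_{0}=0$.

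I expect the real work to be this last, global, step: upgrading the asymptotic information near $x=-\infty$ and the real-axis information to a description of the entire level set. The delicate points are to rule out spurious unbounded ``ridge'' components of $\{|G_{n}^{\ast}|\geq c\}$ inside the strip, and to show that each connected component of $\{|G_{n}^{\ast}|<c\}$ meets only a bounded range of $\Im z$ (hence only finitely many zeros). The natural tools are the subharmonicity of $\log|G_{n}^{\ast}|$ (which already eliminates compact superlevel components), the almost-periodicity of $G_{n}^{\ast}$ on the strip — by which the level picture repeats itself, up to small perturbation, as $\Im z\rightarrow\pm\infty$ — together with the quantitative estimate displayed above, which fully controls the situation for $x$ sufficiently negative.
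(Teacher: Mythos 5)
Your route is genuinely different from the paper's: you rely on the subharmonicity of $u=\log|G_{n}^{\ast}|$, the maximum modulus principle, the asymptotic $u(x,y)=2^{x}\cos(y\log 2)+O(3^{x})$ uniformly as $x\rightarrow-\infty$, and the elementary bound $|G_{n}^{\ast}(x+iy)|\leq G_{n}^{\ast}(x)$. The paper handles $x_{0}<0$ by a different mechanism: it regards the level curve as the trace of the zeros of the deformed family $F_{n,w}^{\ast}(z):=G_{n}^{\ast}(z)-1-w$ as $w$ runs once around the circle $|1+w|=p_{k_{n}}^{x_{0}}$, so that each map $\varphi_{j}(w):=z_{j,w}$ sends a compact circle onto an arc-connected component, which is thereby a bounded closed curve by construction. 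For $x_{0}>0$ the paper invokes the P\'{o}lya--Sz\"{e}g\"{o} sign-count to show that each horizontal line meets the curve; your argument (monotonicity of $G_{n}^{\ast}$ on $\mathbb{R}$ from $1$ to $+\infty$, the inequality $|G_{n}^{\ast}(x+iy)|\leq G_{n}^{\ast}(x)$, and the intermediate value theorem) is cleaner and yields $b_{n,x_{0}}^{-}=c_{n,x_{0}}$ in the same breath. For $x_{0}=0$ both treatments rest on the same asymptotic.

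However, the sketch has a genuine gap, which you flag yourself in the last paragraph. The step ``every connected component of $\{|G_{n}^{\ast}|\leq c\}$ is bounded'' (for $x_{0}<0$) does not follow from what you establish: the absence of compact superlevel components, together with the real axis and the two half-planes $\{x\leq x_{1}\}$, $\{x\geq x_{2}\}$ lying in $\{|G_{n}^{\ast}|>c\}$, does not rule out a sublevel component running off to $\Im z\rightarrow+\infty$ inside the remaining vertical strip. One needs horizontal barriers, i.e.\ lines $y=y_{0}$ with $|y_{0}|$ arbitrarily large along which $|G_{n}^{\ast}|>c$ across the whole strip; almost-periodicity can supply them, but that is exactly the step you defer. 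Similarly, the connectivity of $\{|G_{n}^{\ast}|<c\}$ when $x_{0}>0$, which you need in order to conclude there is a single arc-connected component, is asserted rather than derived. The paper's $\varphi_{j}$-construction sidesteps both difficulties in the $x_{0}<0$ case, since each component arises as a continuous image of a compact circle and so is automatically bounded and closed; this is the device your sketch lacks. Until the ``bounded range of $\Im z$ per sublevel component'' and ``connected sublevel set when $x_{0}>0$'' assertions are made precise, the conclusions about the number and shape of arc-connected components---which is what the proposition asserts---remain unproved.
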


\begin{proof}
Because of the function of the left side of (3.4) is even with respect to
the variable $y$, we will assume that $y\geq 0$. For a given a real number $%
x_{0}\leq 0$ we write (3.4) as 
\[
2^{2x}+...+n^{2x}+2.1^{x}\sum_{m=2}^{n}m^{x}\cos \left( y\log (\frac{m}{1}%
)\right) +2.2^{x}\sum_{m=3}^{n}m^{x}\cos \left( y\log (\frac{m}{2})\right) + 
\]%
\begin{equation}\label{3.6}
+...+2(n-1)^{x}\sum_{m=n}^{n}m^{x}\cos \left( y\log (\frac{m}{n-1})\right)
=p_{k_{n}}^{2x_{0}}-1  
\end{equation}%
and, since $p_{k_{n}}^{2x_{0}}-1\leq 0$, the variable $y$ satisfies either 
\begin{equation}\label{3.7}
\frac{\pi }{2}<y\log n\mbox{, if }n\mbox{ is a composite number,}  
\end{equation}%
or 
\begin{equation}\label{3.8}
\frac{\pi }{2}<y\log (n-1)\mbox{, if }n\mbox{ is prime.}  
\end{equation}%
(Otherwise the left side of (3.6) would be positive). Hence, by (3.7), or
(3.8), the level curve $\left\vert G_{n}^{\ast }(z)\right\vert
=p_{k_{n}}^{x_{0}}$, for a given a real number $x_{0}\leq 0$, has no point
in the strip $\left\{ (x,y):0\leq y\leq \frac{\pi }{2\log n}\right\} $,
when $n$ is composite, and it has no point in the strip $\left\{ (x,y):0\leq
y\leq \frac{\pi }{2\log (n-1)}\right\} $, if $n$ is prime.

Now assume that $x_{0}<0$. By writing $\left\vert G_{n}^{\ast
}(z)\right\vert =p_{k_{n}}^{x_{0}}$ as $\left\vert 1+w\right\vert =r_{x_{0}}$%
, where $w:=2^{z}+...+n^{z}$ (this expression does not contain the term $%
p_{k_{n}}^{z}$) and $r_{x_{0}}:=p_{k_{n}}^{x_{0}}$, the equation (3.6)
describes the continuous motion of each zero $z=(x,y)$ of the function 
\[
F_{n,w}^{\ast }(z):=G_{n}^{\ast }(z)-1-w 
\]%
when $w$ runs on the circle $\left\vert 1+w\right\vert =r_{x_{0}}$. Indeed,
fixed $n>2$, for each $w$ of the circle $\left\vert 1+w\right\vert
=r_{x_{0}} $ the function $F_{n,w}^{\ast }(z)$ has infinitely many zeros on
the half-plane\ $\left\{ z:\Im z\geq 0\right\} $ situated on a vertical
strip. These zeros will be supposed lexicografically ordered. That is, given
a point $w$ of the circle $\left\vert 1+w\right\vert =r_{x_{0}}$, a zero $%
z_{w}=(x,y)$ of $F_{n,w}^{\ast }(z)$ precedes to other zero $z_{w}^{\prime
}=(x^{\prime },y^{\prime })$ if and only if either $y<y^{\prime }$ or $%
x<x^{\prime }$, if $y=y^{\prime }$. Then, by taking the interval $\left(
0,2\pi \right] $ as a determination of the argument (the domain
corresponding to the $\log $ function is $\Omega :=%
\mathbb{C}
\setminus \left[ 0,+\infty \right) $), let us define the map%
\[
\varphi _{1}(w):=z_{1,w}\mbox{,} 
\]%
that assigns to each $w$ of the circle $\left\vert 1+w\right\vert =r_{x_{0}}$
the first zero, $z_{1,w}$, of $F_{n,w}^{\ast }(z)$ with respect to the order
defined. Because of $0<r_{x_{0}}<1$, the circle $\left\vert 1+w\right\vert
=r_{x_{0}}$ is in $\Omega $ and the argument of $w$ varies on a certain
interval $\left[ \pi -\alpha _{x_{0}},\pi +\alpha _{x_{0}}\right] $ for some 
$\alpha _{x_{0}}$ such that\ $0<\alpha _{x_{0}}<\frac{\pi }{4}$, so $\left[
\pi -\alpha _{x_{0}},\pi +\alpha _{x_{0}}\right] \subset \left( 0,2\pi %
\right] $. Thus, the function $\varphi _{1}$ transforms the circle $%
\left\vert 1+w\right\vert =r_{x_{0}}$ on a closed curve whose equation is
given by (3.6) and it constitutes the first arc-connected component of the
level curve. By analogy, the function 
\[
\varphi _{j}(w):=z_{j,w}\mbox{, }j>1\mbox{,} 
\]%
where $z_{j,w}$ is the $jth$ zero of $F_{n,w}^{\ast }(z)$ with respect to
the order defined, transforms the circle $\left\vert 1+w\right\vert
=r_{x_{0}}$ on a closed curve, whose equation is given by (3.6), and it is
the $jth$ arc-connected component of the level curve, and so on. Finally, as
the function of the left side of equation (3.6) is even with respect to the
variable $y$, we obtain by simmetry with respect to the real axis the closed
arc-connected components of the level curve in the half-plane $\left\{ z:\Im
z<0\right\} $.

If $x_{0}=0$, the right side of (3.6) is $0$. Then dividing (3.6) by $%
2^{x+1} $, the equation of the level curve is 
\[
2^{x-1}+...+\left( \frac{n}{2}\right) ^{2x}2^{x-1}+\cos (y\log
2)+1^{x}\sum_{m=3}^{n}\left( \frac{m}{2}\right) ^{x}\cos \left( y\log (\frac{%
m}{1})\right) + 
\]%
\[
+2^{x}\sum_{m=3}^{n}\left( \frac{m}{2}\right) ^{x}\cos \left( y\log (\frac{m%
}{2})\right) +...+ 
\]%
\begin{equation}\label{3.9}
+(n-1)^{x}\sum_{m=n}^{n}\left( \frac{m}{2}\right) ^{x}\cos \left( y\log (%
\frac{m}{n-1})\right) =0\mbox{.}  
\end{equation}%
By taking the limit as $x\rightarrow -\infty $, all the terms of the left
side of (3.9), except $\cos \left( y\log 2\right) $, tend to $0$. Then the
equation $\cos \left( y\log 2\right) =0$ gives us the horizontal asymptotes
of the level curve. Consequently, the level curve has infinitely many
arc-connected components which are open curves (closed on the right) having
asymptotes of equations $y=\left( 2k+1\right) \frac{\pi }{2\log 2}$, $k\in 
\mathbb{Z}
$, oriented to $-\infty $.

For studying the case $x_{0}>0$, let $y_{0}$ be a fixed real number. By
substituting it in (3.4), we have an expression of the form 
\begin{equation}\label{3.10}
1+a_{2}2^{x}+a_{3}3^{x}+...+a_{n(n-1)}\left( n(n-1)\right)
^{x}+a_{n^{2}}n^{2x}=p_{k_{n}}^{2x_{0}}\mbox{,}  
\end{equation}%
where the $a$'s are real numbers (the left side of (3.10) does not contain
the term $a_{p_{k_{n}}}p_{k_{n}}^{x}$). Because the left side of (3.10) has
been obtained from (3.3) as the sum of the squares of the real and imaginary
parts of $G_{n}^{\ast }(z)$, for $z=x+iy_{0}$, the number of changes of the
sign of its coefficients is a nonnegative even integer. Hence, since $%
p_{k_{n}}^{2x_{0}}>1$, the number, say $W$, of changes of the sign of the
coefficients of the equation, equivalent to (3.10), 
\[
1-p_{k_{n}}^{2x_{0}}+a_{2}2^{x}+a_{3}3^{x}+...+a_{n(n-1)}\left(
n(n-1)\right) ^{x}+a_{n^{2}}n^{2x}=0\mbox{,} 
\]%
is odd. Then, by P\'{o}lya and Sz\"{e}go's formula $\left[ 18\mbox{, p. 46}%
\right] $, $W-N$ is an even nonnegative integer, where $N$ denotes the
number of real zeros (counting multiplicities) of the preceding equation.
Therefore, $N$ must be positive, which implies that equation (3.10) has
solution. Consequently, as $y_{0}$ is arbitrary, the variable $y$ takes all
the real values. On the other hand, by making $y=0$ in (3.4), one has 
\[
1+2^{x}+...+\mbox{ }n^{x}=p_{k_{n}}^{x_{0}} 
\]%
(the left side of this equation does not contain the term $p_{k_{n}}^{x}$)
and we then claim that the above equation has real solution if and only if $%
x_{0}>0$. Indeed, clearly the equation has no real solution if $x_{0}\leq 0$%
. Then if the equation has solution, $x_{0}$ must be positive. Conversely,
by assuming $x_{0}>0$, the number $p_{k_{n}}^{x_{0}}$ is greater than $1$
and\ $W$, the number of changes of the sign of the equation 
\[
1-p_{k_{n}}^{x_{0}}+2^{x}+...+\mbox{ }n^{x}=0\mbox{,} 
\]%
is $1$. Thus, by P\'{o}lya and Sz\"{e}go's formula $\left[ 18\right] $, the
number, $N$, of real solutions of the above equation is necessarily $1$.
Hence the above equation has only a real solution, say $c_{n,x_{0}}$, and
the claim follows. Furthermore, since $(c_{n,x_{0}},0)$ is a point of the
level curve, from Definition 6, one has 
\[
b_{n,x_{0}}^{-}\leq c_{n,x_{0}}\mbox{.} 
\]%
But, if we suppose that $b_{n,x_{0}}^{-}<c_{n,x_{0}}$, again from Definition
6, for some $y$, the point $\left( b_{n,x_{0}}^{-},y\right) $ belongs to the
level curve and then $\left\vert G_{n}^{\ast
}(b_{n,x_{0}}^{-}+iy)\right\vert =p_{k_{n}}^{x_{0}}$. Now, because $%
G_{n}^{\ast }(z)$ is strictely increasing on the real axis, we obtain 
\[
p_{k_{n}}^{x_{0}}=\left\vert G_{n}^{\ast }(b_{n,x_{0}}^{-}+iy)\right\vert
\leq \left\vert G_{n}^{\ast }(b_{n,x_{0}}^{-})\right\vert <\left\vert
G_{n}^{\ast }(c_{n,x_{0}})\right\vert =p_{k_{n}}^{x_{0}}\mbox{,} 
\]%
a contradiction. Therefore, 
\[
b_{n,x_{0}}^{-}=c_{n,x_{0}}\mbox{.} 
\]%
Finally, since the variable\ $y$ takes all real numbers, the unique point
that the level curve meets to the real axis is $(b_{n,x_{0}}^{-},0)$ and
there is no vertical asymptote, the level curve $\left\vert G_{n}^{\ast
}(z)\right\vert =p_{k_{n}}^{x_{0}}$ has only one arc-connected component.
The proof is now completed.
\end{proof}

From Example 7 and Proposition 8 we obtain the following consequence.

\begin{corollary}
For a given integer $n>2$ and a real number\ $x_{0}$, let $\left\vert
G_{n}^{\ast }(z)\right\vert =p_{k_{n}}^{x_{0}}$ be the corresponding level
curve. If a point $z^{\prime }=(x^{\prime },y^{\prime })$ satisfies $%
\left\vert G_{n}^{\ast }(z^{\prime })\right\vert <p_{k_{n}}^{x_{0}}$, then
there exists a point $z=(x,y)$ of the level curve such that $x>x^{\prime }$.
\end{corollary}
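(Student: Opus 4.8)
The plan is to exploit the structure of equation (3.4) together with the boundedness properties of the level curve already established. Fix $n>2$ and $x_0\in\mathbb{R}$, and suppose $z'=(x',y')$ satisfies $\left\vert G_n^{\ast}(z')\right\vert<p_{k_n}^{x_0}$. First I would observe that the left-hand side of (3.4), viewed as a function of $x$ with $y=y'$ held fixed, equals $\left\vert G_n^{\ast}(x+iy')\right\vert^2$, which is a finite sum of exponentials $c_j\,t_j^{x}$ with the dominant term $n^{2x}$ having positive coefficient; hence it tends to $+\infty$ as $x\to+\infty$. At $x=x'$ its value is $\left\vert G_n^{\ast}(x'+iy')\right\vert^2<p_{k_n}^{2x_0}$. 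By continuity and the intermediate value theorem, there exists some $x>x'$ with $\left\vert G_n^{\ast}(x+iy')\right\vert^2=p_{k_n}^{2x_0}$, i.e. the point $z=(x,y')$ lies on the level curve $\left\vert G_n^{\ast}(z)\right\vert=p_{k_n}^{x_0}$ and satisfies $x>x'$.

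The only care needed is to handle the edge cases allowed by Proposition 8. When $x_0>0$, the argument above is already complete and in fact the horizontal line $y=y'$ meets the curve. When $x_0\le 0$, Proposition 8 tells us the curve consists of closed components ($x_0<0$) or of open components with horizontal asymptotes $y=(2k+1)\frac{\pi}{2\log 2}$ ($x_0=0$), and in particular the curve avoids a horizontal strip about the real axis. So I should note that if $y'$ happens to be a value for which the horizontal line $y=y'$ does not meet the curve (for instance $y'=0$ when $x_0<0$), the one-variable IVT argument at height $y'$ still produces a real $x>x'$ with $\left\vert G_n^{\ast}(x+iy')\right\vert^2=p_{k_n}^{2x_0}$ — the point $(x,y')$ is genuinely on the curve because membership in the level curve is exactly the equation $\left\vert G_n^{\ast}(x+iy)\right\vert=p_{k_n}^{x_0}$, and nothing in Proposition 8 forbids a particular horizontal line from meeting the curve; it only describes the global shape of the components. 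Thus no case distinction is truly required: the proof is uniform in $x_0$.

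The main (and essentially only) obstacle is making sure that the function $x\mapsto\left\vert G_n^{\ast}(x+iy')\right\vert^2$ really does exceed $p_{k_n}^{2x_0}$ for large $x$ regardless of $y'$. This follows because that function is $n^{2x}$ plus lower-order exponential terms, so its growth is governed by $n^{2x}\to+\infty$; the oscillatory cross terms $2\,j^{x}m^{x}\cos(y'\log(m/j))$ are all $O(n^{2x}\cdot(j m/n^2)^{\text{something}})$ — more simply, each has modulus at most $2\,j^{x}m^{x}\le 2n^{2x-?}$, dominated by the single clean $n^{2x}$ term for $x$ large — so the sum is eventually positive and unbounded. Hence for any fixed $y'$ there is $X_0$ with $\left\vert G_n^{\ast}(X+iy')\right\vert^2>p_{k_n}^{2x_0}$ for all $X\ge X_0$, and choosing $X_0>x'$ the intermediate value theorem applied on $[x',X_0]$ finishes the argument.

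\begin{proof}
Fix an integer $n>2$, a real number $x_0$, and a point $z'=(x',y')$ with
\[
\left\vert G_n^{\ast}(z')\right\vert<p_{k_n}^{x_0}\mbox{.}
\]
Consider the real function $h(x):=\left\vert G_n^{\ast}(x+iy')\right\vert^{2}$, $x\in\mathbb{R}$, where $y'$ is held fixed. By squaring, as in the derivation of (3.4), $h(x)$ is a finite sum of terms of the form $c\,t^{x}$ with $t>0$, whose term of largest growth is $n^{2x}$ with coefficient $1$; all the remaining terms are $O\!\left(n^{(2-\eta)x}\right)$ for some $\eta>0$ as $x\to+\infty$. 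Hence
\[
\lim_{x\rightarrow+\infty}h(x)=+\infty\mbox{,}
\]
so there exists $X_0>x'$ with $h(X_0)>p_{k_n}^{2x_0}$. Since $h$ is continuous and $h(x')=\left\vert G_n^{\ast}(z')\right\vert^{2}<p_{k_n}^{2x_0}$, the intermediate value theorem provides $x\in(x',X_0)$ with $h(x)=p_{k_n}^{2x_0}$, that is,
\[
\left\vert G_n^{\ast}(x+iy')\right\vert=p_{k_n}^{x_0}\mbox{.}
\]
Therefore $z:=(x,y')$ is a point of the level curve $\left\vert G_n^{\ast}(z)\right\vert=p_{k_n}^{x_0}$ with $x>x'$, and the corollary follows.
\end{proof}
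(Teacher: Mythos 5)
Your proof is correct but takes a genuinely different, and considerably more elementary, route than the paper's. The paper's proof of this corollary leans on the qualitative picture of the level curve established in Example 7 and Proposition 8: it treats $x_0<0$, $x_0=0$, $x_0>0$ separately, identifies the set $\left\{ w:\left\vert G_{n}^{\ast }(w)\right\vert <p_{k_{n}}^{x_{0}}\right\} $ with the interiors of closed components (respectively the region to the left of the single open component), and then uses a separation argument to locate a boundary point to the right of $z^{\prime }$. Your argument bypasses all of that: holding $y=y^{\prime }$ fixed, you note that $\left\vert G_{n}^{\ast }(x+iy^{\prime })\right\vert ^{2}$ is a finite sum of real exponentials in $x$ with a dominant positive term, hence tends to $+\infty $ as $x\rightarrow +\infty $, and a one-variable intermediate value theorem on $[x^{\prime },X_{0}]$ immediately yields a level-curve point at height $y^{\prime }$ to the right of $z^{\prime }$. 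This avoids the topological subtleties about arc-connected components dividing the plane, needs none of Proposition 8, and actually proves slightly more, namely that the point can be taken on the same horizontal line through $z^{\prime }$. Two small remarks. First, the worry in your plan about Proposition 8 possibly forbidding certain horizontal lines is needless: for $x_{0}\leq 0$ and $y^{\prime }$ in the strip excluded by (3.7)--(3.8), the left side of (3.6) is positive, so $\left\vert G_{n}^{\ast }(x^{\prime }+iy^{\prime })\right\vert >1\geq p_{k_{n}}^{x_{0}}$ and the hypothesis of the corollary can never hold there, so that case never arises. Second, when $n$ is prime the term $n^{2x}$ is absent from (3.4) (since $p_{k_{n}}=n$), so the dominant term is $(n-1)^{2x}$ rather than $n^{2x}$; this changes nothing in the argument, but the phrase ``the dominant term $n^{2x}$'' is inaccurate in that case and should be stated in terms of the largest surviving index.
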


\begin{proof}
By Example 7 and Proposition 8, if $x_{0}<0$, the level curve $\left\vert
G_{n}^{\ast }(z)\right\vert =p_{k_{n}}^{x_{0}}$ has infinitely many
arc-connected components which are closed curves. Thus, the level curve
divides to the complex plane in two regions $\left\{ w:\left\vert
G_{n}^{\ast }(w)\right\vert <p_{k_{n}}^{x_{0}}\right\} $ and $\left\{
w:\left\vert G_{n}^{\ast }(w)\right\vert >p_{k_{n}}^{x_{0}}\right\} $,
defined by the interior and the exterior points of its arc-connected
components, respectively. Thus, if $z^{\prime }=(x^{\prime },y^{\prime })$
is a point satisfying $\left\vert G_{n}^{\ast }(z^{\prime })\right\vert
<p_{k_{n}}^{x_{0}}$, $z^{\prime }$ is in the interior of a closed
arc-connected component and consequently it must have a point $z=(x,y)$ of
the level curve situated on the right of $z^{\prime }$, so $x>x^{\prime }$.
The case $x_{0}=0$ is analogous to the previous one because the
arc-connected components (open curves) are closed on the right side and open
on the left side with asymptotes oriented to $-\infty $. Finally, if $x_{0}>0
$, the variable $y$ take all real values and $x$ runs on the interval $\left[
b_{n,x_{0}}^{-},b_{n,x_{0}}\right] $. Then, since for any $n>2$, 
\[
\max \left\{ \left\vert G_{n}^{\ast }(w)\right\vert :\Re w\leq
b_{n,x_{0}}^{-}\right\} =G_{n}^{\ast }(b_{n,x_{0}}^{-})\mbox{,}
\]%
the set $\left\{ w:\left\vert G_{n}^{\ast }(w)\right\vert
<p_{k_{n}}^{x_{0}}\right\} $ coincides exactly with the region situated on
the left of the level curve. Hence, if $z^{\prime }=(x^{\prime },y^{\prime })
$ satisfies $\left\vert G_{n}^{\ast }(z^{\prime })\right\vert
<p_{k_{n}}^{x_{0}}$, there exists a point $z=(x,y)$ of the level curve on
the right of $z^{\prime }$ and, consequently, $x>x^{\prime }$. The proof is
now completed.
\end{proof}

The next result shows that the set $R_{n}:=\overline{\left\{ \Re %
z:G_{n}(z)=0\right\} }$ contains intervals.

\begin{theorem}
Let $\left[ a_{n,}b_{n}\right] $ be the critical interval of $G_{n}(z)$, $%
n>2 $, $x_{0}\in $ $R_{n}$ and $b_{n,x_{0}}$ its upper extreme associated
with respect to the level curve $\left\vert G_{n}^{\ast }(z)\right\vert
=p_{k_{n}}^{x_{0}}$. Then 
\[
\left[ x_{0},b_{n,x_{0}}\right] \cap \left[ a_{n,}b_{n}\right] \subset R_{n}%
\mbox{.} 
\]
\end{theorem}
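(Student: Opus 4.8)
The plan is to prove the interval $\left[x_{0},b_{n,x_{0}}\right]\cap\left[a_{n},b_{n}\right]$ lies in $R_{n}$ by showing that every real number $x$ in this intersection satisfies the characterization of Theorem 5: there exists $y\in\mathbb{R}$ with $A_{n}(x,y)=0$, equivalently the vertical line $\Re z=x$ meets the level curve $\left\vert G_{n}^{\ast}(z)\right\vert=p_{k_{n}}^{x}$ (Theorem 4). The starting data are that $x_{0}\in R_{n}$, so the line $\Re z=x_{0}$ already meets $\left\vert G_{n}^{\ast}(z)\right\vert=p_{k_{n}}^{x_{0}}$, and that $b_{n,x_{0}}$ is by Definition 6 the rightmost abscissa attained on that same curve. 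The geometric idea is a monotonicity-in-$x_{0}$ comparison: as $x_{0}$ increases, the ``radius'' $p_{k_{n}}^{x_{0}}$ of the level set increases, so the corresponding upper extreme $b_{n,x_{0}}$ increases as well, and the point $\left(x,0\right)$ on the real axis can be caught between the curve for parameter $x_{0}$ and the curve for parameter $x$.

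First I would fix $x\in\left[x_{0},b_{n,x_{0}}\right]\cap\left[a_{n},b_{n}\right]$ and consider the point $z^{\prime}=(x,0)$ on the real axis. The key inequality to establish is $\left\vert G_{n}^{\ast}(x)\right\vert\le p_{k_{n}}^{x_{0}}$, i.e.\ that on the real segment up to $b_{n,x_{0}}$ the modulus of $G_{n}^{\ast}$ does not exceed the level value attached to $x_{0}$. This should follow because $G_{n}^{\ast}(t)=1+2^{t}+\cdots+n^{t}$ (with the $p_{k_{n}}^{t}$ term deleted) is real, positive and strictly increasing on $\mathbb{R}$, so $\left\vert G_{n}^{\ast}(x)\right\vert=G_{n}^{\ast}(x)\le G_{n}^{\ast}(b_{n,x_{0}})$; and the defining property of $b_{n,x_{0}}$ — that $(b_{n,x_{0}},\tilde y)$ lies on $\left\vert G_{n}^{\ast}(z)\right\vert=p_{k_{n}}^{x_{0}}$ for some $\tilde y$, combined with $\left\vert G_{n}^{\ast}(b_{n,x_{0}}+i\tilde y)\right\vert\le\left\vert G_{n}^{\ast}(b_{n,x_{0}})\right\vert$ (which is the same dominance of the real-axis value used in the proof of Proposition 8) — forces $G_{n}^{\ast}(b_{n,x_{0}})\ge p_{k_{n}}^{x_{0}}$, hence $\left\vert G_{n}^{\ast}(x)\right\vert\le p_{k_{n}}^{x_{0}}$. (For $x=b_{n,x_{0}}$ one gets equality and the line already meets the curve; I would treat that endpoint separately if needed.)

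Next, with $\left\vert G_{n}^{\ast}(z^{\prime})\right\vert\le p_{k_{n}}^{x_{0}}$ in hand, I would feed $z^{\prime}=(x,0)$ into Corollary 9 applied to the parameter value $x_{0}$: there exists a point $z=(x^{\ast},y^{\ast})$ of the level curve $\left\vert G_{n}^{\ast}(z)\right\vert=p_{k_{n}}^{x_{0}}$ with $x^{\ast}>x$ — unless already $\left\vert G_{n}^{\ast}(z^{\prime})\right\vert=p_{k_{n}}^{x_{0}}$, in which case $z^{\prime}$ itself is on the curve and we are done. Now I use continuity: moving along a suitable arc-connected component of the level curve from the point $z^{\prime}$'s side to the point $z$, or more simply tracking the continuous function $t\mapsto\left\vert G_{n}^{\ast}(x+it)\right\vert$ together with the family of level curves, the value $\left\vert G_{n}^{\ast}(x+it)\right\vert$ starts (at $t=0$) at most $p_{k_{n}}^{x_{0}}\le p_{k_{n}}^{x}$ and, since for fixed $x$ it is unbounded as $t$ varies only if $\cos$-terms cooperate — actually here it is cleaner to argue directly that the level curve $\left\vert G_{n}^{\ast}(z)\right\vert=p_{k_{n}}^{x}$ (parameter now $x$, which is $\ge x_{0}$) must meet the line $\Re z=x$. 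I would invoke Corollary 9 once more, this time for parameter $x$: since $\left\vert G_{n}^{\ast}(x)\right\vert=G_{n}^{\ast}(x)$, and $p_{k_{n}}^{x}\ge p_{k_{n}}^{x_{0}}\ge\left\vert G_{n}^{\ast}(x)\right\vert$, the point $(x,0)$ satisfies $\left\vert G_{n}^{\ast}(x,0)\right\vert\le p_{k_{n}}^{x}$; if strict, Corollary 9 gives a level-curve point to the right, and since that curve also must (by Proposition 8, when $x>0$, or by the closed-component structure when $x\le0$) have its leftmost real-axis meeting point $c_{n,x}=b^{-}_{n,x}$ to the left of or equal to $x$, the vertical line $\Re z=x$ separates two points of the (connected, when $x>0$) level curve, or passes through one of its closed components, hence intersects it by connectedness. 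By Theorem 4 this yields $x\in R_{n}$, completing the argument.

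The main obstacle I anticipate is the case analysis forced by the sign of the parameter, together with making the ``separation implies intersection'' step rigorous. When $x>0$ Proposition 8 says the level curve is a single open arc on which $y$ ranges over all of $\mathbb{R}$ and which meets the real axis exactly at $b^{-}_{n,x}$, so an intermediate-value argument on the connected curve is clean; but when $x\le0$ the level curve is a disjoint union of closed loops (or left-open arcs with horizontal asymptotes), and I must argue that the point $(x,0)$, shown to satisfy $\left\vert G_{n}^{\ast}(x,0)\right\vert\le p_{k_{n}}^{x}$, actually lies on or inside one of these components, then either it is on the curve or, being interior, a horizontal ray from it must cross the bounding loop — which is essentially Corollary 9 again, but I should check the edge case where $\left\vert G_{n}^{\ast}(x,0)\right\vert$ equals the level value so that $(x,0)$ is on the curve outright. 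A secondary subtlety is the boundary behaviour at $x_{0}$ itself and at $b_{n,x_{0}}$, and ensuring the inequality $G_{n}^{\ast}(b_{n,x_{0}})\ge p_{k_{n}}^{x_{0}}$ is genuinely an equality (it should be, since the maximum of $\left\vert G_{n}^{\ast}\right\vert$ on $\Re z\le b_{n,x_{0}}$ is attained on the real axis, exactly as exploited in the proof of Proposition 8), so that the whole segment $\left[x_{0},b_{n,x_{0}}\right]$ — not merely its interior — is captured.
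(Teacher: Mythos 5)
Your plan — find a point on the vertical line $\Re z = x$ where $\lvert G_{n}^{\ast}\rvert$ is $\le p_{k_{n}}^{x}$, combine it with the fact that $A_{n}(x,0)\ge 0$ (Theorem 5), and apply the intermediate value theorem — is exactly the right template, but the point you choose, $z'=(x,0)$ on the real axis, cannot do the job, and the inequality you assert for it is false. You derive (correctly) from Definition 6 and the triangle inequality that $G_{n}^{\ast}(b_{n,x_{0}})\ge p_{k_{n}}^{x_{0}}$, and you observe that $G_{n}^{\ast}(x)\le G_{n}^{\ast}(b_{n,x_{0}})$; but these two facts give no information about the sign of $G_{n}^{\ast}(x)-p_{k_{n}}^{x_{0}}$, so the step ``hence $\lvert G_{n}^{\ast}(x)\rvert\le p_{k_{n}}^{x_{0}}$'' is a non sequitur. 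In fact the opposite inequality holds: for $x\in\left[a_{n},b_{n}\right]$, Theorem 5 itself gives $A_{n}(x,0)=G_{n}^{\ast}(x)-p_{k_{n}}^{x}\ge 0$, and since $x\ge x_{0}$ this forces $G_{n}^{\ast}(x)\ge p_{k_{n}}^{x}\ge p_{k_{n}}^{x_{0}}$. So on the real axis the modulus is already at or above the level value, and there is no ``low'' point at $y=0$ to feed into Corollary 9 or into the IVT; everything downstream of this inequality (both invocations of Corollary 9, the separation/connectedness discussion) inherits the error.

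The fix is to get the low point off the real axis, and this is what the paper does. Since $x_{0}\in R_{n}$, the level curve $\lvert G_{n}^{\ast}(z)\rvert=p_{k_{n}}^{x_{0}}$ (parameter $x_{0}$, not $x$) passes through a point with abscissa $x_{0}$; by the discussion after (3.4), the set of abscissas attained on that curve is the whole interval $\left[b_{n,x_{0}}^{-},b_{n,x_{0}}\right]$ (or $\left(-\infty,b_{n,x_{0}}\right]$ when $x_{0}=0$), which contains $\left[x_{0},b_{n,x_{0}}\right]$. Hence for any $x_{1}$ with $x_{0}<x_{1}<b_{n,x_{0}}$ there is a $y_{1}$ with $\lvert G_{n}^{\ast}(x_{1}+iy_{1})\rvert=p_{k_{n}}^{x_{0}}$, giving $A_{n}(x_{1},y_{1})=p_{k_{n}}^{x_{0}}-p_{k_{n}}^{x_{1}}<0$. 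Combined with $A_{n}(x_{1},0)\ge 0$ and continuity in $y$, this yields $x_{1}\in R_{n}$; closedness of $R_{n}$ then handles the endpoint $b_{n,x_{0}}$. Your proposal, as written, never produces a point with $A_{n}<0$, so the IVT has nothing to bite on.
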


\begin{proof}
Since $x_{0}\in $ $R_{n}$, because of Theorem 4, there exists a value $y_{0}$
such that the point $\left( x_{0},y_{0}\right) $ lies on the level curve $%
\left\vert G_{n}^{\ast }(z)\right\vert =p_{k_{n}}^{x_{0}}$. From Definition
6, if $x_{0}\neq 0$, then $x_{0}\in \left[ b_{n,x_{0}}^{-},b_{n,x_{0}}\right]
$ and $x_{0}\in \left( -\infty ,b_{n,x_{0}}\right] $, if $x_{0}=0$.
Therefore, for any $x_{0}\in $ $R_{n}$, one has $x_{0}\leq b_{n,x_{0}}$.
Thus, if $x_{0}=b_{n,x_{0}}$, since $R_{n}\subset \left[ a_{n},b_{n}\right] $%
, the theorem trivially follows. Let us assume that $x_{0}<b_{n,x_{0}}$ and
let $x_{1}\in \left[ a_{n},b_{n}\right] $ be a point such that $%
x_{0}<x_{1}<b_{n,x_{0}}$. Then $x_{1}\in \left[ b_{n,x_{0}}^{-},b_{n,x_{0}}%
\right] $, or $x_{1}\in \left( -\infty ,b_{n,x_{0}}\right] $, so the line of
equation $x=x_{1}$ intersects to the level curve $\left\vert G_{n}^{\ast
}(z)\right\vert =p_{k_{n}}^{x_{0}}$ at least at a point, say $%
z_{1}=x_{1}+iy_{1}$. Consequently $\left\vert G_{n}^{\ast
}(z_{1})\right\vert =p_{k_{n}}^{x_{0}}$. Now, by the definition of $%
A_{n}(x,y)$, we obtain 
\[
A_{n}(x_{1},y_{1})=\left\vert G_{n}^{\ast }(z_{1})\right\vert
-p_{k_{n}}^{x_{1}}=p_{k_{n}}^{x_{0}}-p_{k_{n}}^{x_{1}}<0\mbox{.} 
\]%
On the other hand, since $x_{1}\in \left[ a_{n},b_{n}\right] $, by Theorem
5, one has $A_{n}(x_{1},0)\geq 0$. Then, the continuity of $A_{n}(x_{1},y)$
as function of $y$ implies the existence of a value, say $y_{2}$, such that $%
A_{n}(x_{1},y_{2})=0$. Now, because of Theorem 5, it follows that $x_{1}\in
R_{n}$ and then 
\begin{equation}\label{3.11}
\left[ x_{0},b_{n,x_{0}}\right) \cap \left[ a_{n,}b_{n}\right] \subset R_{n}%
\mbox{.}  
\end{equation}%
By supposing $b_{n,x_{0}}>b_{n}$, from (3.11), we get 
\[
\left[ x_{0},b_{n,x_{0}}\right] \cap \left[ a_{n,}b_{n}\right] =\left[
x_{0},b_{n,x_{0}}\right) \cap \left[ a_{n,}b_{n}\right] \subset R_{n} 
\]%
and then the theorem follows. If $b_{n,x_{0}}\leq b_{n}$, since $x_{0}\in
R_{n}\subset $ $\left[ a_{n,}b_{n}\right] $, again by (3.11), one has 
\[
\left[ x_{0},b_{n,x_{0}}\right) \cap \left[ a_{n,}b_{n}\right] =\left[
x_{0},b_{n,x_{0}}\right) \subset R_{n}\mbox{,} 
\]%
but, as $R_{n}$ is closed, the point $b_{n,x_{0}}$ also belongs to $R_{n}$.
Then, $\left[ x_{0},b_{n,x_{0}}\right] \subset R_{n}$ and hence, in this
case, we also get 
\[
\left[ x_{0},b_{n,x_{0}}\right] \cap \left[ a_{n,}b_{n}\right] =\left[
x_{0},b_{n,x_{0}}\right] \subset R_{n}\mbox{,} 
\]%
which proves completely the theorem.
\end{proof}

An important conclusion on the sets $R_{3}$ and $R^{(3)}$ is deduced from
Theorem 10.

\begin{corollary}
The sets $R_{3}$ and $R^{(3)}$ are equal to $\left[ a_{3,}b_{3}\right] $ and 
$\left[ a^{(3)},b^{(3)}\right] $, respectively.
\end{corollary}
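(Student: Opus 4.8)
The plan is to prove the inclusion $[a_{3},b_{3}]\subset R_{3}$ (the reverse inclusion $R_{3}\subset [a_{3},b_{3}]$ being already available for every $n$), and then to transfer the conclusion to $R^{(3)}$ by the symmetry $G_{3}(z)=\zeta _{3}(-z)$.

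First I would record the explicit data for $n=3$ furnished by Example 7: here $G_{3}^{\ast }(z)=G_{3}(z)-3^{z}=1+2^{z}$, and the upper extreme associated with an arbitrary real $x_{0}$ is $b_{3,x_{0}}=\frac{\log (1+3^{x_{0}})}{\log 2}$. The key elementary observation is that $1+3^{x_{0}}>2^{x_{0}}$ for every real $x_{0}$ (one has $2^{x_{0}}\leq 1<1+3^{x_{0}}$ when $x_{0}\leq 0$, and $2^{x_{0}}<3^{x_{0}}<1+3^{x_{0}}$ when $x_{0}>0$), so that $b_{3,x_{0}}>x_{0}$ \emph{strictly}, for all $x_{0}$.

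Next I would run a bootstrap argument. Put $c:=\sup \{x:[a_{3},x]\subset R_{3}\}$. Since $a_{3}\in R_{3}$ this set is nonempty and $c\geq a_{3}$; since $R_{3}$ is closed, $[a_{3},c]\subset R_{3}$, so $c\in R_{3}\subset [a_{3},b_{3}]$. If $c<b_{3}$, then applying Theorem 10 with $x_{0}=c\in R_{3}$ gives $[c,b_{3,c}]\cap [a_{3},b_{3}]\subset R_{3}$; because $b_{3,c}>c$ and $c<b_{3}$, the number $c'':=\min \{b_{3,c},b_{3}\}$ satisfies $c<c''\leq b_{3}$ and $[c,c'']\subset R_{3}$, whence $[a_{3},c'']\subset R_{3}$, contradicting the definition of $c$ as a supremum. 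Therefore $c=b_{3}$, i.e. $[a_{3},b_{3}]\subset R_{3}$, and hence $R_{3}=[a_{3},b_{3}]$.

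Finally, from $G_{3}(z)=\zeta _{3}(-z)$ one has $Z_{G_{3}(z)}=-Z_{\zeta _{3}(z)}$, so taking real parts and closures gives $R^{(3)}=-R_{3}$; combining with the identities $a^{(3)}=-b_{3}$ and $b^{(3)}=-a_{3}$ from (2.6) yields $R^{(3)}=-[a_{3},b_{3}]=[a^{(3)},b^{(3)}]$, which completes the proof. The only real content is the bootstrap step, and within it the point that must be handled with care is the strict inequality $b_{3,x_{0}}>x_{0}$: without strictness, Theorem 10 could fail to enlarge the interval and the induction would stall at some interior point of $[a_{3},b_{3}]$. For $n=3$ this strictness is immediate from the explicit formula for $b_{3,x_{0}}$; everything else is bookkeeping with the closedness of $R_{3}$ and the standard inclusion $R_{3}\subset [a_{3},b_{3}]$.
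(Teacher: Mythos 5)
Your proof is correct, but it takes a genuinely different route from the paper's. The paper argues in two concrete steps: it applies Theorem 10 at $x_{0}=a_{3}$, uses $a_{3}<0$ to conclude $b_{3,a_{3}}=\frac{\log (1+3^{a_{3}})}{\log 2}>0$ so that $[a_{3},\min\{b_{3,a_{3}},b_{3}\}]\subset R_{3}$, and then, in the remaining case $b_{3,a_{3}}<b_{3}$, observes that $0\in R_{3}$, applies Theorem 10 once more at $x_{0}=0$ to get $[0,1]\cap[a_{3},b_{3}]\subset R_{3}$, and finishes by invoking the bound $b_{3}\leq \sup\{x:1+2^{x}\geq 3^{x}\}=1$ from Corollary 3 and Lemma 1 of reference $[6]$. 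You instead isolate the strict inequality $b_{3,x_{0}}>x_{0}$ for all real $x_{0}$ (equivalently $1+3^{x_{0}}>2^{x_{0}}$), and run a continuous-induction argument on $c:=\sup\{x:[a_{3},x]\subset R_{3}\}$: closedness of $R_{3}$ gives $c\in R_{3}$, and if $c<b_{3}$, a single application of Theorem 10 at $c$ enlarges the interval past $c$, contradicting the supremum, so $c=b_{3}$. Your route eliminates the case split and, more importantly, makes no use whatsoever of the upper bound $b_{3}\leq 1$; the only external inputs are Example 7's explicit formula for $b_{3,x_{0}}$, Theorem 10, and $R_{3}\subset[a_{3},b_{3}]$. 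The paper's version buys slightly shorter bookkeeping at the cost of citing the extra inequality for $b_{3}$; yours buys a cleaner, uniform mechanism and makes transparent exactly which feature of $n=3$ is being used, namely the strict dominance $b_{3,x_{0}}>x_{0}$. (One small point worth stating explicitly in your write-up: the set $\{x:[a_{3},x]\subset R_{3}\}$ is bounded above by $b_{3}$ because $R_{3}\subset[a_{3},b_{3}]$, which is what makes $c$ a well-defined real number.) The transfer to $R^{(3)}$ via $G_{3}(z)=\zeta_{3}(-z)$ and $(2.6)$ is the same in both proofs.
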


\begin{proof}
The bound $a_{3}:=\inf \left\{ \Re z:G_{3}(z)=0\right\} $ belongs to $%
R_{3}$ and, by Corollary 3, $a_{3}$ is negative. Its upper extreme
associated, with respect to the level curve $\left\vert G_{3}^{\ast
}(z)\right\vert =p_{k_{3}}^{x_{0}}$, determined in Example 7, is 
\[
b_{3,a_{3}}=\frac{\log (1+3^{a_{3}})}{\log 2}>0\mbox{.} 
\]%
Then, by taking $x_{0}=a_{3}$ in Theorem 10, we have%
\begin{equation}\label{3.12}
\left[ a_{3},\frac{\log (1+3^{a_{3}})}{\log 2}\right] \cap \left[
a_{3,}b_{3}\right] \subset R_{3}\mbox{.}  
\end{equation}%
Hence, the corollary follows, if $\frac{\log (1+3^{a_{3}})}{\log 2}\geq
b_{3}$, because $R_{3}\subset $ $\left[ a_{3,}b_{3}\right] $. Let us assume $%
\frac{\log (1+3^{a_{3}})}{\log 2}<b_{3}$. Then, since $a_{3}<0$ and $\frac{%
\log (1+3^{a_{3}})}{\log 2}>0$, from (3.12), it deduces in particular that $%
0\in R_{3}$. But, as we have\ just seen in Example 7, the upper extreme
associated to $0$ is $1$, so, again from Theorem 10, we get 
\begin{equation}\label{3.13}
\left[ 0,1\right] \cap \left[ a_{3,}b_{3}\right] \subset R_{3}\mbox{.} 
\end{equation}%
From Corollary 3 and $\left[ 6\mbox{, Lemma 1}\right] $, we know that 
\[
0<b_{3}\leq \sup \left\{ x:1+2^{x}\geq 3^{x}\right\} =1\mbox{.} 
\]%
Then, by (3.13), one has 
\[
\left[ 0,1\right] \cap \left[ a_{3,}b_{3}\right] =\left[ 0,b_{3}\right]
\subset R_{3}\mbox{,} 
\]%
which, jointly with (3.12), involves that 
\[
R_{3}=\left[ a_{3},b_{3}\right] \mbox{.} 
\]%
Now, taking into account (2.6), one has $R^{(3)}=\left[ -b_{3},-a_{3}\right]
=\left[ a^{(3)},b^{(3)}\right] $. Then the proof is completed.
\end{proof}

From the above corollary it follows that the functions $G_{3}(z)$ and $\zeta
_{3}(z)$ have infinitely many zeros arbitrarily near any vertical line
contained in $\left[ a_{3},b_{3}\right] \times 
\mathbb{R}
$ and $\left[ a^{(3)},b^{(3)}\right] \times 
\mathbb{R}
$, respectively. Therefore, the distribution of the real parts of the zeros
of $G_{3}(z)$ and $\zeta _{3}(z)$ is completely uniform on their critical
intervals.

\begin{lemma}
For every $n>2$, let us denote by $f_{n}$ the function that assigns to each
real number $x_{0}$ its upper extreme associated $b_{n,x_{0}}$ with respect
to the level curve $\left\vert G_{n}^{\ast }(z)\right\vert
=p_{k_{n}}^{x_{0}} $. Then $f_{n}$ is a strictely increasing function.
\end{lemma}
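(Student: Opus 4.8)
The plan is to reduce the statement to a single application of Corollary 9. Since "$f_{n}$ strictly increasing" means precisely that $x_{0}<x_{1}$ forces $b_{n,x_{0}}<b_{n,x_{1}}$, I would fix an integer $n>2$ and two real numbers $x_{0}<x_{1}$, and aim to exhibit a point of the level curve $\left\vert G_{n}^{\ast }(z)\right\vert =p_{k_{n}}^{x_{1}}$ whose abscissa strictly exceeds $b_{n,x_{0}}$.

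First I would invoke Definition 6: the upper extreme $b_{n,x_{0}}$ is attained, so there is a real number $y_{0}$ such that $(b_{n,x_{0}},y_{0})$ lies on the level curve $\left\vert G_{n}^{\ast }(z)\right\vert =p_{k_{n}}^{x_{0}}$; that is, $\left\vert G_{n}^{\ast }(b_{n,x_{0}}+iy_{0})\right\vert =p_{k_{n}}^{x_{0}}$. Next, because $p_{k_{n}}\geq 2>1$, the map $t\mapsto p_{k_{n}}^{t}$ is strictly increasing, whence $p_{k_{n}}^{x_{0}}<p_{k_{n}}^{x_{1}}$ and therefore the point $z^{\prime }=b_{n,x_{0}}+iy_{0}$ satisfies $\left\vert G_{n}^{\ast }(z^{\prime })\right\vert <p_{k_{n}}^{x_{1}}$. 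Applying Corollary 9 to the level curve $\left\vert G_{n}^{\ast }(z)\right\vert =p_{k_{n}}^{x_{1}}$ and to this $z^{\prime }$ produces a point $z=(x,y)$ of that curve with $x>b_{n,x_{0}}$. Finally, since by Definition 6 the number $b_{n,x_{1}}$ is the maximum abscissa among the points of the level curve $\left\vert G_{n}^{\ast }(z)\right\vert =p_{k_{n}}^{x_{1}}$, I conclude $b_{n,x_{1}}\geq x>b_{n,x_{0}}$, i.e. $f_{n}(x_{1})>f_{n}(x_{0})$, which is what is required.

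The argument is short, and the only delicate point is the legitimacy of placing the companion point $z^{\prime }$ on the line $\Re z=b_{n,x_{0}}$: this needs the supremum defining $b_{n,x_{0}}$ to be actually realized on the level curve. I would justify this by Definition 6 together with the discussion preceding it, where it is established that, for fixed $n$ and $x_{0}$, the abscissa $x$ of the points of the level curve runs over a closed interval $\left[ b_{n,x_{0}}^{-},b_{n,x_{0}}\right] $ (or over $(-\infty ,b_{n,x_{0}}]$ when $x_{0}=0$), so that $b_{n,x_{0}}$ is indeed attained. Apart from that, the only inputs are Corollary 9 and the elementary monotonicity of $t\mapsto p_{k_{n}}^{t}$; in particular no case distinction on the sign of $x_{0}$ or $x_{1}$ is needed, since Corollary 9 already covers the cases $x_{0}<0$, $x_{0}=0$, and $x_{0}>0$.
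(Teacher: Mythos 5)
Your proposal is correct and essentially identical to the paper's argument: both pick a point of the lower-level curve realizing its upper extreme, observe that it lies strictly inside the region $\left\vert G_{n}^{\ast }(z)\right\vert <p_{k_{n}}^{\text{(larger)}}$, invoke Corollary 9 to obtain a point of the higher-level curve with strictly greater abscissa, and then bound that abscissa by the higher upper extreme via Definition 6. The only difference is purely notational (the paper writes $x_{1}<x_{0}$ rather than $x_{0}<x_{1}$), and your remark that the supremum in Definition 6 is attained is indeed the same implicit step the paper relies on.
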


\begin{proof}
Let us consider two real numbers $x_{1}<$ $x_{0}$. Then, since $%
p_{k_{n}}\geq 2$ for all $n\geq 2$, one has $%
p_{k_{n}}^{x_{1}}<p_{k_{n}}^{x_{0}}$. From Definition 6, given $%
f_{n}(x_{1}):=b_{n,x_{1}}$, there exists some $y_{1}$ such that the point $%
z_{1}=(b_{n,x_{1}},y_{1})$ belongs to the level curve $\left\vert
G_{n}^{\ast }(z)\right\vert =p_{k_{n}}^{x_{1}}$. Then, since $%
p_{k_{n}}^{x_{1}}<p_{k_{n}}^{x_{0}}$, the point $z_{1}$ satisfies $%
\left\vert G_{n}^{\ast }(z_{1})\right\vert <p_{k_{n}}^{x_{0}}$ . By applying
Corollary 9, there exists a point $z_{2}=(x_{2},y_{2})$ of the level curve $%
\left\vert G_{n}^{\ast }(z)\right\vert =p_{k_{n}}^{x_{0}}$ such that $%
b_{n,x_{1}}<x_{2}$. Now, as $z_{2}$ belongs to the level curve $\left\vert
G_{n}^{\ast }(z)\right\vert =p_{k_{n}}^{x_{0}}$, again from Definition 6, we
have $x_{2}\leq b_{n,x_{0}}=f_{n}(x_{0})$. Thus 
\[
f_{n}(x_{1})=b_{n,x_{1}}<x_{2}\leq b_{n,x_{0}}=f_{n}(x_{0})\mbox{,} 
\]%
so $f_{n}$ is strictely increasing.
\end{proof}

In the next result we find, for each integer $n>2$, a lower bound of the\
set of upper extremes $b_{n,x_{0}}$ when $x_{0}\in 
\mathbb{R}
$.

\begin{lemma}
For every $n>2$, let us denote $b_{n}^{\ast }=\sup \left\{ \Re %
z:G_{n}^{\ast }(z)=0\right\} $. Then 
\[
b_{n}^{\ast }<b_{n,x_{0}}\mbox{ for all }x_{0}\in 
\mathbb{R}
\mbox{.} 
\]
\end{lemma}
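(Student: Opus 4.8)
The plan is to show that the vertical line $x=b_n^{\ast}$ cannot be the right-most abscissa reached by the level curve $\left\vert G_n^{\ast}(z)\right\vert=p_{k_n}^{x_0}$, for any real $x_0$. The natural strategy is to exhibit, for each $x_0$, a point of the plane strictly to the right of $x=b_n^{\ast}$ at which $\left\vert G_n^{\ast}\right\vert$ is strictly below $p_{k_n}^{x_0}$, and then invoke Corollary 9 to obtain a point of the level curve even further right; since every point of the level curve has abscissa at most $b_{n,x_0}$ by Definition 6, this forces $b_n^{\ast}<b_{n,x_0}$. The key observation is that $G_n^{\ast}(z)=G_n(z)-p_{k_n}^{z}$ has $b_n^{\ast}$ as its right-most real part of a zero, so for $\Re z>b_n^{\ast}$ the function $G_n^{\ast}$ has no zeros; but more is needed — we need $\left\vert G_n^{\ast}\right\vert$ to actually drop below a prescribed positive level somewhere in that half-plane.

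First I would fix $n>2$ and an arbitrary $x_0\in\mathbb{R}$, and recall that $G_n^{\ast}(z)=1+2^{z}+\cdots+n^{z}$ with the term $p_{k_n}^{z}$ omitted, so it is an exponential polynomial whose frequencies $\log k$ ($k\in\{1,\dots,n\}\setminus\{p_{k_n}\}$) include $\log 1=0$; hence as $\Re z\to-\infty$ one has $G_n^{\ast}(z)\to 1$. Next I would locate a zero of $G_n^{\ast}$ on the real axis: by the same Pólya–Szegő sign-change argument used in the proof of Proposition 8 (the equation $1+2^{x}+\cdots+n^{x}=0$, with $p_{k_n}^{x}$ omitted, has no real root since all coefficients are positive, so in fact $G_n^{\ast}$ may have no real zero) — more robustly, I would use that $G_n^{\ast}$ is almost-periodic and, being non-constant, by Lemma 1 has zeros, all lying in a vertical strip with right edge $b_n^{\ast}$ (this is exactly the content of the earlier propositions of [13] quoted in the proof of Theorem 2). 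Take such a zero $z^{\ast}$ with $\Re z^{\ast}\le b_n^{\ast}$; then $\left\vert G_n^{\ast}(z^{\ast})\right\vert=0<p_{k_n}^{x_0}$. Now I would push this zero slightly to the right: since $G_n^{\ast}$ is analytic and not identically zero, the set $\{z:\left\vert G_n^{\ast}(z)\right\vert<p_{k_n}^{x_0}\}$ is open and contains $z^{\ast}$, hence contains a point $z'$ with $\Re z'$ arbitrarily close to $\Re z^{\ast}$ — but this only gives abscissa near $b_n^{\ast}$, not exceeding it. To get strictly past $b_n^{\ast}$ I would instead argue: the level set $\{\left\vert G_n^{\ast}\right\vert = p_{k_n}^{x_0}\}$ meets the line $x=b_n^{\ast}$ only if $p_{k_n}^{x_0}$ is among the values of $\left\vert G_n^{\ast}\right\vert$ there; whether or not it does, because $G_n^{\ast}$ has a zero on or to the left of this line and $\left\vert G_n^{\ast}\right\vert$ grows to $+\infty$ with $\Re z$, continuity along any horizontal line through that zero produces a point $z'$ with $\left\vert G_n^{\ast}(z')\right\vert<p_{k_n}^{x_0}$ and $\Re z'>b_n^{\ast}$ provided we first ensure the zero's abscissa equals $b_n^{\ast}$ up to an arbitrarily small error and then use strict monotonicity of $\left\vert G_n^{\ast}(x)\right\vert = G_n^{\ast}(x)$ on the real axis past its last sign change.

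Cleanest is to avoid the real axis entirely: pick the zero $z^{\ast}$ of $G_n^{\ast}$ with $\Re z^{\ast}$ as close to $b_n^{\ast}$ as we like (possible since $b_n^{\ast}$ is the supremum of such real parts), note $\left\vert G_n^{\ast}(z^{\ast})\right\vert=0<p_{k_n}^{x_0}$, and apply Corollary 9 with $z'=z^{\ast}$: there is a point $z=(x,y)$ of the level curve $\left\vert G_n^{\ast}(z)\right\vert=p_{k_n}^{x_0}$ with $x>\Re z^{\ast}$. By Definition 6, $x\le b_{n,x_0}$, so $\Re z^{\ast}<b_{n,x_0}$. Taking the supremum over all zeros of $G_n^{\ast}$ gives $b_n^{\ast}\le b_{n,x_0}$. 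The strictness then follows because Corollary 9 yields $x>\Re z^{\ast}$ with a genuine gap: choosing $z^{\ast}$ with $\Re z^{\ast}>b_n^{\ast}-\eta$ for small $\eta>0$ we get $b_{n,x_0}>\Re z^{\ast}>b_n^{\ast}-\eta$, and a separate bound showing $b_{n,x_0}$ cannot equal $b_n^{\ast}$ — e.g. at $z=(b_n^{\ast},y)$ on the level curve one would need $\left\vert G_n^{\ast}(b_n^{\ast}+iy)\right\vert=p_{k_n}^{x_0}>0$ while the nearby zero on the line $x=b_n^{\ast}$ (if $b_n^{\ast}$ is attained) or a limiting argument forces a strictly larger abscissa on the level curve by Corollary 9 applied to that zero. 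The main obstacle, and the step I would spend the most care on, is precisely this passage from $\le$ to $<$: one must rule out the degenerate possibility that the level curve is tangent to or touches the line $x=b_n^{\ast}$ exactly at its rightmost extent, which I expect to handle by applying Corollary 9 to a zero of $G_n^{\ast}$ whose abscissa is itself $<b_n^{\ast}$ (such zeros exist and accumulate up to $b_n^{\ast}$, but any individual one has abscissa strictly below the supremum unless the supremum is attained — and if it is attained, Corollary 9 applied directly to that attained zero gives a level-curve point strictly to its right, hence $b_{n,x_0}>b_n^{\ast}$).
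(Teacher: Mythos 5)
Your proposal handles the case where the supremum $b_{n}^{\ast }$ is \emph{attained} by a zero of $G_{n}^{\ast }$ correctly: applying Corollary 9 to that zero $w$ (with $\left\vert G_{n}^{\ast }(w)\right\vert =0<p_{k_{n}}^{x_{0}}$) gives a point of the level curve with abscissa strictly greater than $b_{n}^{\ast }$, hence $b_{n}^{\ast }<b_{n,x_{0}}$. This is exactly what the paper does in that case.

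But the case where $b_{n}^{\ast }$ is \emph{not} attained is left with a genuine gap. Applying Corollary 9 to a zero $z^{\ast }$ with $\Re z^{\ast }<b_{n}^{\ast }$ only yields a level-curve point with abscissa $>\Re z^{\ast }$, hence $b_{n,x_{0}}>\Re z^{\ast }$; and taking $\Re z^{\ast }\uparrow b_{n}^{\ast }$ only produces the weak inequality $b_{n,x_{0}}\geq b_{n}^{\ast }$. You acknowledge this difficulty (``the passage from $\leq $ to $<$'') but do not close it: the argument you sketch does not rule out $b_{n,x_{0}}=b_{n}^{\ast }$. The paper's key move — which you are missing — is to introduce an auxiliary, strictly smaller level $x_{1}<x_{0}$, run the same sequence-of-zeros argument against the curve $\left\vert G_{n}^{\ast }(z)\right\vert =p_{k_{n}}^{x_{1}}$ to obtain the \emph{weak} inequality $b_{n}^{\ast }\leq b_{n,x_{1}}$, and then invoke Lemma 12 (strict monotonicity of $x_{0}\mapsto b_{n,x_{0}}$) to upgrade it: $b_{n}^{\ast }\leq b_{n,x_{1}}<b_{n,x_{0}}$. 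Without Lemma 12, or some substitute for it (e.g.\ a separate proof that the set $\left\{ \Re z:G_{n}^{\ast }(z)=0\right\} $ is closed so the supremum is always attained — a fact the paper does not establish and which is not immediate), your argument does not give strict inequality in the unattained case.
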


\begin{proof}
Let us assume that $G_{n}^{\ast }(z)$ has a zero, say $w$, on the vertical
line $x=b_{n}^{\ast }$. Then $\Re w=b_{n}^{\ast }$. Now, let $x_{0}$ be a
real number. Then, since $G_{n}^{\ast }(w)=0$, one has $\left\vert
G_{n}^{\ast }(w)\right\vert <p_{k_{n}}^{x_{0}}$. From Corollary 9, there
exists a point $z_{1}=\left( x_{1},y_{1}\right) $ of the level curve $%
\left\vert G_{n}^{\ast }(z)\right\vert =p_{k_{n}}^{x_{0}}$ such that $%
b_{n}^{\ast }<x_{1}$. By Definition 6 it follows that $x_{1}\leq $ $%
b_{n,x_{0}}$ and then 
\[
b_{n}^{\ast }<x_{1}\leq b_{n,x_{0}}\mbox{,} 
\]%
which proves the lemma under the above assumption.

If $G_{n}^{\ast }(z)$ has no zero on the line $x=b_{n}^{\ast }$, according
to the definition of $b_{n}^{\ast }$, there exists a sequence of zeros $%
\left( z_{m}\right) _{m}$ of $G_{n}^{\ast }(z)$ with $\Re z_{m}<$ $%
b_{n}^{\ast }$ such that $\lim_{m}\Re z_{m}=b_{n}^{\ast }$. Let us consider
a real number $x_{1}<x_{0}$. Then, as $G_{n}^{\ast }(z_{m})=0$, we get $%
\left\vert G_{n}^{\ast }(z_{m})\right\vert <p_{k_{n}}^{x_{1}}$ for all $m$.
This implies, from Corollary 9, the existence of a sequence of points of the
level curve $\left\vert G_{n}^{\ast }(z)\right\vert =p_{k_{n}}^{x_{1}}$, say 
$\left( w_{m}\right) _{m}$, such that $\Re z_{m}<\Re w_{m}$ for all $m$. On
the other hand, since every $w_{m}$ is on the level curve $\left\vert
G_{n}^{\ast }(z)\right\vert =p_{k_{n}}^{x_{1}}$, from Definition 6, one has $%
\Re w_{m}\leq b_{n,x_{1}}$ for all $m$. Hence, we have $\Re z_{m}<\Re
w_{m}\leq b_{n,x_{1}}$ for all $m$ and, taking the limit when $m\rightarrow
\infty $, we obtain 
\begin{equation}\label{3.14}
b_{n}^{\ast }\leq b_{n,x_{1}}\mbox{.}  
\end{equation}%
Now, by Lemma 12, since $x_{1}<x_{0}$, it follows that $%
b_{n,x_{1}}<b_{n,x_{0}}$ and, noting inequality (3.14), we\ get 
\[
b_{n}^{\ast }\leq b_{n,x_{1}}<b_{n,x_{0}}\mbox{,} 
\]%
which completes the proof.
\end{proof}

The above lemma, jointly with Theorem 10, allows us to determine the sets $%
R_{n}$ and $R^{(n)}$ in the case $n=4$.

\begin{theorem}
The sets $R_{4}$ and $R^{(4)}$ are equal to $\left[ a_{4,}b_{4}\right] $ and 
$\left[ a^{(4)},b^{(4)}\right] $, respectively.
\end{theorem}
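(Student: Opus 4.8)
The plan is to follow the scheme used for $n=3$ in Corollary 11, but with the auxiliary function now $G_{4}^{\ast }(z)=G_{4}(z)-3^{z}=1+2^{z}+4^{z}$, and to organise the final extraction as a single maximality argument rather than a finite chain, since for $n=4$ one application of Theorem 10 will not reach $b_{4}$. First I would record the facts about the level curves $\left\vert G_{4}^{\ast }(z)\right\vert =3^{x_{0}}$ that are needed. Putting $v:=2^{z}$ one has $G_{4}^{\ast }(z)=1+v+v^{2}$, whose zeros are the primitive cube roots of unity, hence lie on $\left\vert v\right\vert =1$; so every zero of $G_{4}^{\ast }$ has real part $0$, i.e. $b_{4}^{\ast }=0$ in the notation of Lemma 13. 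Writing $v=re^{i\theta }$ and expanding $\left\vert 1+v+v^{2}\right\vert ^{2}$ as a quadratic in $\cos \theta $, a one-variable minimisation gives, for every $r>0$,
\[
\min_{\left\vert v\right\vert =r}\left\vert 1+v+v^{2}\right\vert \geq \frac{\sqrt{3}}{2}\left\vert 1-r^{2}\right\vert ,\qquad \max_{\left\vert v\right\vert =r}\left\vert 1+v+v^{2}\right\vert =1+r+r^{2},
\]
with equality in the first whenever $2-\sqrt{3}\leq r\leq 2+\sqrt{3}$. Finally, set $\psi (x):=4^{x}-\frac{2}{\sqrt{3}}3^{x}$; since $\psi ^{\prime }(x)>0$ for $x\geq 0$ and $\psi (0)<1<\lim_{x\rightarrow \infty }\psi (x)$, there is a unique $x^{\ast }>0$ with $\psi (x^{\ast })=1$, and $\psi (x)<1$ for $0\leq x<x^{\ast }$; one checks $2^{x^{\ast }}<2+\sqrt{3}$, so the equality case above is available for $r=2^{x}$ with $0\leq x\leq x^{\ast }$.

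The key inequality is $b_{4}\leq x^{\ast }$. If $w$ is any zero of $G_{4}(z)$ then $G_{4}^{\ast }(w)=G_{4}(w)-3^{w}=-3^{w}$, so $\left\vert G_{4}^{\ast }(w)\right\vert =3^{\Re w}$, and with $r=2^{\Re w}$ the lower bound above gives $3^{\Re w}\geq \frac{\sqrt{3}}{2}\left\vert 1-4^{\Re w}\right\vert $. If $\Re w>0$ this reads $4^{\Re w}\leq 1+\frac{2}{\sqrt{3}}3^{\Re w}$, i.e. $\psi (\Re w)\leq 1$, whence $\Re w\leq x^{\ast }$; and if $\Re w\leq 0$ this is trivial. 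Taking the supremum over the zeros of $G_{4}$ yields $b_{4}\leq x^{\ast }$. Next I would verify that $b_{4,x}>x$ for every $x<x^{\ast }$. For $x\leq 0$ this is Lemma 13, since $b_{4,x}>b_{4}^{\ast }=0\geq x$. For $0<x<x^{\ast }$, the line $\Re z=x$ carries a point $z^{\prime }$ with $\left\vert G_{4}^{\ast }(z^{\prime })\right\vert =\min_{\left\vert v\right\vert =2^{x}}\left\vert 1+v+v^{2}\right\vert =\frac{\sqrt{3}}{2}(4^{x}-1)$, and $\frac{\sqrt{3}}{2}(4^{x}-1)<3^{x}$ is exactly $\psi (x)<1$, which holds; so $\left\vert G_{4}^{\ast }(z^{\prime })\right\vert <3^{x}$ and Corollary 9 produces a point of the level curve $\left\vert G_{4}^{\ast }(z)\right\vert =3^{x}$ with real part strictly greater than $x$, i.e. $b_{4,x}>x$.

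Now the extraction. Let $M:=\sup \left\{ t\in \left[ a_{4},b_{4}\right] :\left[ a_{4},t\right] \subset R_{4}\right\} $; the set is nonempty ($a_{4}\in R_{4}$ and $R_{4}$ is closed), $a_{4}\leq M\leq b_{4}$, and $\left[ a_{4},M\right] \subset R_{4}$. Suppose $M<b_{4}$. Since $M\in R_{4}$, Theorem 10 gives $\left[ M,b_{4,M}\right] \cap \left[ a_{4},b_{4}\right] \subset R_{4}$. As $M<b_{4}\leq x^{\ast }$, the previous paragraph gives $b_{4,M}>M$, so $\left[ M,\min \left\{ b_{4,M},b_{4}\right\} \right] \subset R_{4}$ with $\min \left\{ b_{4,M},b_{4}\right\} >M$; together with $\left[ a_{4},M\right] \subset R_{4}$ this yields $\left[ a_{4},\min \left\{ b_{4,M},b_{4}\right\} \right] \subset R_{4}$, contradicting the definition of $M$. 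Hence $M=b_{4}$, so $\left[ a_{4},b_{4}\right] \subset R_{4}$, and since $R_{4}\subset \left[ a_{4},b_{4}\right] $ always (Corollary 3 and the remarks preceding it), $R_{4}=\left[ a_{4},b_{4}\right] $. Finally, $\zeta _{4}(z)=G_{4}(-z)$ and (2.6) give $R^{(4)}=-R_{4}=\left[ -b_{4},-a_{4}\right] =\left[ a^{(4)},b^{(4)}\right] $.

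The main obstacle is the inequality $b_{4}\leq x^{\ast }$. The crude estimate $\left\vert 1+2^{w}+4^{w}\right\vert \geq 4^{\Re w}-2^{\Re w}-1$ only yields $b_{4}\leq \beta _{4}$, where $1+2^{\beta _{4}}+3^{\beta _{4}}=4^{\beta _{4}}$, and $\beta _{4}>x^{\ast }$; so the sharp circle minimum $\min_{\left\vert v\right\vert =r}\left\vert 1+v+v^{2}\right\vert =\frac{\sqrt{3}}{2}\left\vert 1-r^{2}\right\vert $ is genuinely needed — were $b_{4}>x^{\ast }$, the maximality argument would stall at $x^{\ast }$ and leave $(x^{\ast },b_{4}]$ uncovered. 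The rest (the $\left\vert 1+v+v^{2}\right\vert $ computations and the monotonicity of $\psi $) is elementary, and the iteration itself is handled once and for all by the supremum $M$.
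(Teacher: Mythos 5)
Your proof is correct, and the core estimate is the same as the paper's, but your packaging of the two main steps differs in ways worth noting.

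\textbf{Where the approaches coincide.} Both you and the paper reduce the problem to the bound $b_{4}\leq x^{\ast }$, where $x^{\ast }$ is the unique positive root of $4^{x}=1+2\cdot 3^{x-\frac{1}{2}}$; this is exactly the quantity the paper calls $\beta _{4}$ (your $\psi (x)=1$ is $4^{x}=1+\frac{2}{\sqrt{3}}3^{x}=1+2\cdot 3^{x-\frac{1}{2}}$, so $x^{\ast }=\beta _{4}$). In both arguments the bound comes from the same fact about the level set: the range of $\left\vert 1+v+v^{2}\right\vert $ on a circle $\left\vert v\right\vert =r$ forces $\frac{\sqrt{3}}{2}\left\vert 1-r^{2}\right\vert \leq 3^{x_{0}}$ whenever the line $\Re z=\log _{2}r$ meets the level curve. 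The paper extracts this by squaring the Cartesian equation (3.16), substituting $\cos (y\log 4)=2\cos ^{2}(y\log 2)-1$, solving the resulting quadratic in $\cos (y\log 2)$, and reading off the admissible $x$-range (3.18); you obtain the same constraint more directly by minimising the quadratic in $\cos \theta $ for $v=re^{i\theta }$. Your version is cleaner for $n=4$ because $G_{4}^{\ast }(z)=1+2^{z}+4^{z}$ is a polynomial in $2^{z}$; the paper's Cartesian-equation route is the one that generalizes to other $n$ in the surrounding theory, which is presumably why it is used.

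\textbf{Where the approaches differ.} To pass from the pointwise estimate to $\left[ 0,b_{4}\right] \subset R_{4}$, the paper for each $x_{0}\in (0,b_{4})$ explicitly locates a point $\left( x_{1},y_{1}\right) $ on the level curve $\left\vert G_{4}^{\ast }\right\vert =3^{x_{0}}$ with $x_{1}>x_{0}$, and separately shows $b_{4,x_{0}}^{-}<x_{0}$; this gives $b_{4,x_{0}}^{-}<x_{0}<b_{4,x_{0}}$, so the vertical line $\Re z=x_{0}$ meets the level curve and $x_{0}\in R_{4}$ by Theorem~4, directly and without iteration. You instead show only $b_{4,x}>x$ for $x<x^{\ast }$ (via Corollary~9, which lets you skip the explicit $\left( x_{1},y_{1}\right) $ computation), and then drive the inclusion forward with a supremum argument using Theorem~10. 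Both routes are valid; the paper's is non-iterative but requires the additional one-sided claim $b_{4,x_{0}}^{-}<x_{0}$, whereas your supremum scheme is a self-contained application of Theorem~10 alone and illustrates that one does not need the lower extremes $b_{n,x_{0}}^{-}$ at all.

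\textbf{One small caution on notation.} In your closing paragraph you write $\beta _{4}$ for the solution of $1+2^{x}+3^{x}=4^{x}$ (the bound from $\left[ 6\text{, Lemma~1}\right] $), but the paper reserves $\beta _{4}$ for the solution of $4^{x}=1+2\cdot 3^{x-\frac{1}{2}}$, which equals your $x^{\ast }$. Your mathematical point is right — the crude triangle-inequality bound on $b_{4}$ lands strictly above $x^{\ast }$ and would not close the argument — but using $\beta _{4}$ for a different quantity than the paper does risks confusing a reader consulting both; I would rename it.
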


\begin{proof}
The last prime not exceeding $4$ is $3$, so $p_{k_{4}}=$ $3$. Then $%
G_{4}^{\ast }(z)=1+2^{z}+4^{z}$, and its zeros are 
\[
\frac{2\pi (3k+1)i}{3\log 2}\mbox{, }\frac{2\pi (3k+2)i}{3\log 2}\mbox{, }%
k\in 
\mathbb{Z}
\mbox{.}
\]%
Hence, $b_{4}^{\ast }:=\sup \left\{ \Re z:G_{4}^{\ast }(z)=0\right\} =0$%
. By applying Theorem 10 to $x_{0}=a_{4}$, which is negative by Corollary 3,
one has 
\begin{equation}\label{3.15}
\left[ a_{4},0\right] \subset R_{4}\mbox{,}  
\end{equation}%
because $b_{4}>0$ by virtue of Corollary 3 and $b_{4,x_{0}}>b_{4}^{\ast }=0$
by Lemma 13. From (3.4), the equation of the level curve $\left\vert
G_{4}^{\ast }(z)\right\vert =3^{x_{0}}$ is%
\[
1+2^{2x}+4^{2x}+2.2^{x}\cos (y\log 2)+2.4^{x}\cos \left( y\log 4)\right) +
\]%
\begin{equation}\label{3.16}
+2.2^{x}.4^{x}\cos \left( y\log 2)\right) =3^{2x_{0}}\mbox{.}  
\end{equation}%
Since $\cos \left( y\log 4)\right) =2\cos ^{2}(y\log 2)-1$, from (3.16), we
obtain 
\begin{equation}\label{3.17}
\cos (y\log 2)=\frac{-(1+4^{x})\pm \sqrt{(2.3^{x_{0}})^{2}-(\sqrt{3}%
(4^{x}-1))^{2}}}{4.2^{x}}  
\end{equation}%
and then the variable $x$ must satisfy $(\sqrt{3}(4^{x}-1))^{2}\leq
(2.3^{x_{0}})^{2}$, which is equivalent to say that 
\begin{equation}\label{3.18}
4^{x}\in \left[ 1-2.3^{x_{0}-\frac{1}{2}},1+2.3^{x_{0}-\frac{1}{2}}\right] 
\mbox{.}  
\end{equation}%
Now, for $x_{0}\geq 0$, is $1-2.3^{x_{0}-\frac{1}{2}}<0$. Thus, noting that $%
4^{x}>0$ for any $x$, from (3.18) it follows that the variable $x$ satisfies 
\[
x\leq \frac{\log \left( 1+2.3^{x_{0}-\frac{1}{2}}\right) }{\log 4}\mbox{.}
\]%
Then, by taking $x=x_{1}:=\frac{\log \left( 1+2.3^{x_{0}-\frac{1}{2}%
}\right) }{\log 4}$ in (3.17) (observe that $(2.3^{x_{0}})^{2}-(\sqrt{3}%
(4^{x_{1}}-1))^{2}=0$), there exists a value for $y,$ given by 
\begin{equation}\label{3.19}
\cos (y\log 2)=-\frac{1+3^{x_{0}-\frac{1}{2}}}{2\left( 1+2.3^{x_{0}-\frac{1%
}{2}}\right) ^{\frac{1}{2}}}\mbox{,}  
\end{equation}%
provided that $\left\vert -\frac{1+3^{x_{0}-\frac{1}{2}}}{2\left(
1+2.3^{x_{0}-\frac{1}{2}}\right) ^{\frac{1}{2}}}\right\vert \leq 1$. But $0<%
\frac{1+3^{x_{0}-\frac{1}{2}}}{2\left( 1+2.3^{x_{0}-\frac{1}{2}}\right) ^{%
\frac{1}{2}}}\leq 1$ if and only if $x_{0}\leq \frac{\log \left( 6+3\sqrt{3}%
\right) }{\log 3}$. Then, we claim that any $x_{0}\in \left[ 0,b_{4}\right) $%
, where $b_{4}:=\sup \left\{ \Re z:G_{4}(z)=0\right\} $, satisfies the above
condition. Indeed, because of $\left[ 6\mbox{, Lemma 1}\right] $, one has $%
b_{4}\leq \sup \left\{ x:1+2^{x}+3^{x}\geq 4^{x}\right\} $ and, on the other
hand, it is immediate to check that 
\[
\sup \left\{ x:1+2^{x}+3^{x}\geq 4^{x}\right\} <2<\frac{\log \left( 6+3%
\sqrt{3}\right) }{\log 3}\mbox{,}
\]%
so $x_{0}<b_{4}<2<\frac{\log \left( 6+3\sqrt{3}\right) }{\log 3}$, and the
claim follows. Consequently, for each $x_{0}\in \left[ 0,b_{4}\right) $, by
making $x=x_{1}$ in (3.17), there exists a value of $y$, say $y_{1}$, given
by (3.19), determining a point $\left( x_{1},y_{1}\right) $ of the level
curve $\left\vert G_{4}^{\ast }(z)\right\vert =3^{x_{0}}$. Thus, by
Definition 6, $b_{4,x_{0}}^{-}\leq x_{1}\leq b_{4,x_{0}}$. On the other
hand, from the definition of $x_{1}$, one has 
\begin{equation}\label{3.20}
x_{1}>x_{0}\mbox{ if and only if }4^{x_{0}}<1+2.3^{x_{0}-\frac{1}{2}}\mbox{.}
\end{equation}%
We define $\beta _{4}:=\sup \left\{ x:4^{x}\leq 1+2.3^{x-\frac{1}{2}%
}\right\} $. Then, from P\'{o}lya and Sz\"{e}go's formula $\left[ 18\mbox{,
p. 46}\right] $, the equation $4^{x}=1+2.3^{x-\frac{1}{2}}$ has a unique
real solution, so equal to $\beta _{4}$. Now we claim that 
\begin{equation}\label{3.21}
b_{4}\leq \beta _{4}\mbox{.}  
\end{equation}%
Indeed, if $z=x+iy$ a zero of $G_{4}(z)$, one has $1+2^{z}+4^{z}=-3^{z}$.
Then the real and imaginary parts are equal, 
\[
1+2^{x}\cos (y\log 2)+4^{x}\cos (y\log 4)=-3^{x}\cos (y\log 3)\mbox{,}
\]%
\[
2^{x}\sin (y\log 2)+4^{x}\sin (y\log 4)=-3^{x}\sin (y\log 3)\mbox{.}
\]%
By squaring and adding, we obtain the same expression that in (3.9), but the
right side term which is equal to $3^{2x}$. That means that $x=\Re z$, $%
y=\Im z$ satisfy equation (3.16), with $x_{0}$ replaced by $x$, so
necessarily $4^{x}\leq 1+2.3^{x-\frac{1}{2}}$. Therefore (3.21) follows, as
claimed. In consequence, if $x_{0}\in \left[ 0,b_{4}\right) $, by (3.21) one
has $x_{0}<\beta _{4}$ which implies that $4^{x_{0}}<1+2.3^{x_{0}-\frac{1}{2}%
}$. That is, $x_{0}$ satisfies (3.20) and then we have 
\begin{equation}\label{3.22}
x_{0}<x_{1}\leq b_{4,x_{0}}\mbox{, for all }x_{0}\in \left[ 0,b_{4}\right) 
\mbox{.}  
\end{equation}%
Let $x_{0}$ be an arbitrary point such that $0<x_{0}<b_{4}$. The point $%
b_{4,x_{0}}^{-}$, by Proposition 8, is obtained by making $y=0$ in equation
(3.16), so $b_{4,x_{0}}^{-}$ is the real solution of the equation 
\begin{equation}\label{3.23}
1+2^{x}+4^{x}=3^{x_{0}}\mbox{.}  
\end{equation}%
Then we claim that $b_{4,x_{0}}^{-}<x_{0}$. Indeed,\ if $b_{4,x_{0}}^{-}\geq
x_{0}$, from (3.23), we would have 
\[
3^{x_{0}}=1+2^{b_{4,x_{0}}^{-}}+4^{b_{n,x_{0}}^{-}}\geq
1+2^{x_{0}}+4^{x_{0}}>3^{x_{0}}\mbox{,}
\]%
a contradiction. Then, it follows, according to (3.22) that $%
b_{4,x_{0}}^{-}<x_{0}<b_{4,x_{0}}$, which means that the line $x=x_{0}$
meets to the level curve $\left\vert G_{4}^{\ast }(z)\right\vert =3^{x_{0}}$%
, and, by Theorem 4, $x_{0}\in R_{4}$. As consequence, $\left(
0,b_{4}\right) \subset $ $R_{4}$. Now, noting that $R_{4}$ is closed, one
deduces that $\left[ 0,b_{4}\right] \subset R_{4}$. Finally, taking into
account (3.15), one has $\left[ a_{4},b_{4}\right] \subset R_{4}$ and since $%
R_{4}\subset \left[ a_{4},b_{4}\right] $, we definitively get $\left[
a_{4},b_{4}\right] =R_{4}$. Now, by using (2.6), the set $R^{(4)}$
corresponding to the partial sum $\zeta _{4}(z)$ also satisfies 
\[
R^{(4)}=\left[ -b_{4},-a_{4}\right] =\left[ a^{(4)},b^{(4)}\right] \mbox{.}
\]%
The proof is now completed.
\end{proof}

\section{Distribution of the zeros of $G_{n}^{\ast }(z)$ on the half-planes $%
\Re z>0$ and $\Re z<0$}

Now it is needed to prove a similar result to that of Theorem 2 for the
functions $G_{n}^{\ast }(z)$.

\begin{theorem}
Every function $G_{n}^{\ast }(z)$, $n>2$, possesses infinitely many zeros in
each half-plane $\left\{ z:\Re z<0\right\} $ and $\left\{ z:\Re z>0\right\} $
except for $n=3$, $4$, whose zeros are all imaginary.
\end{theorem}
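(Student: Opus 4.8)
The plan is to mirror the architecture already used for Theorem 2, replacing $G_n$ by $G_n^\ast$ throughout. First I would establish the analogue of ingredient (c) from the proof of Theorem 2: for every $n>2$ the zero set of $G_n^\ast(z)$ fails to lie entirely on the imaginary axis, \emph{except} precisely when $n=3$ or $n=4$. For $n=3$ one has $G_3^\ast(z)=1+2^z$ and for $n=4$ one has $G_4^\ast(z)=1+2^z+4^z$; in both cases the zeros were computed explicitly in Example 7 and in the proof of Theorem 14 and are purely imaginary. For the remaining $n$, I would check directly (or cite the corresponding statement from $[13]$ applied to the Dirichlet polynomial $G_n^\ast$, whose frequency set is $\{\log k: 2\le k\le n,\ k\ne p_{k_n}\}$) that the zeros are not all imaginary; the point is that this frequency set still contains incommensurable elements and is not a scalar multiple of a single frequency once $n\ge 5$.

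Next, granting that $Z_{G_n^\ast}$ is not confined to the imaginary axis, I would run the Hadamard-factorization/Shields argument verbatim. Suppose $n\ge 5$ and, for contradiction, that $G_n^\ast$ has only finitely many zeros off the imaginary axis, say with the conjugate-pairing structure of (2.7); form the monic polynomial $P(z)$ with those zeros as in (2.8), set $H_n^\ast(z):=G_n^\ast(z)/P(z)$, which is entire of order $1$ with purely imaginary zeros $\pm i y_j$, and apply Hadamard to get $H_n^\ast(z)=e^{Az+B}\prod_j(1+z^2/y_j^2)$. Evenness of $H_n^\ast(z)e^{-Az}$ yields the functional equation $P(-z)/P(z)=G_n^\ast(-z)e^{2Az}/G_n^\ast(z)$, the analogue of (2.10). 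If every zero of $P$ is also a zero of $P(-z)$ then $P(z)=P(-z)$, forcing $A=(H_n^\ast)'(0)/H_n^\ast(0)$ to equal $\tfrac{1}{m}\log\bigl(\prod_{k\le n,\,k\ne p_{k_n}} k\bigr)$ (the log of the product of the $k^z$-logarithms at $z=0$, i.e. of $(n!)/p_{k_n}$ up to the missing factor), and then the right-hand side tends to $+\infty$ along the real axis while the left-hand side is identically $1$ — the same contradiction as before, provided one verifies the relevant elementary inequality $\bigl((n!)/p_{k_n}\bigr)^2 > n^{n}$ (or whatever normalization the constant $A$ actually produces) holds for all $n\ge 5$. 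Otherwise $P(-z)/P(z)$ has a genuine pole, so Shields's theorem forces $G_n^\ast(-z)e^{2Az}/G_n^\ast(z)$ to be an exponential polynomial, contradicting the presence of that pole. Hence $G_n^\ast$ has infinitely many zeros in at least one of the two open half-planes.

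Then I would upgrade ``at least one half-plane'' to ``both half-planes'' exactly as in Theorem 2. The function $G_n^\ast(z)=1+2^z+\dots+\widehat{p_{k_n}^z}+\dots+n^z$ (hat denoting omission) is again a purely exponential sum with constant term $1$, so Ritt's formula $[19]$ gives $\sum_l \Re z_{n,l}^\ast = O(1)$ over the full zero set. If all zeros had real part of one fixed sign, say $\ge 0$, then combining the off-axis zero from the previous step with Lemma 1 (applicable since $G_n^\ast$ is almost-periodic) produces infinitely many zeros in a strip $\{\epsilon/2<\Re z<3\epsilon/2\}$, making $\sum_l \Re z_{n,l}^\ast=+\infty$ and contradicting the Ritt bound; symmetrically for real part $\le 0$. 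So $G_n^\ast$ has zeros of both signs, and one more application of Lemma 1 to each such zero yields infinitely many zeros in each of $\{\Re z<0\}$ and $\{\Re z>0\}$.

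The main obstacle I anticipate is the first ingredient — proving cleanly that $Z_{G_n^\ast}$ is not contained in the imaginary axis for every $n\ge 5$ — together with checking the precise elementary inequality needed to close the Hadamard case for $G_n^\ast$ rather than $G_n$ (the constant $A$ and the size comparison change because the factor $p_{k_n}^z$ is missing from $G_n^\ast$). The rest is a line-by-line transcription of the proof of Theorem 2; the only genuinely new work is confirming that removing a single term from $G_n$ does not accidentally collapse the zero set onto the imaginary axis for some $n\ge 5$ and does not spoil the growth estimate that drives the contradiction.
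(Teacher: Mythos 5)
Your overall architecture — Hadamard factorization plus Shields's theorem to exclude a finite off-axis zero set, then Ritt's formula plus Lemma 1 to upgrade from one half-plane to both — is the paper's. But you misplace where the real work sits and, in doing so, open a gap. You isolate, as a separate preliminary step and the ``main obstacle,'' the claim that $Z_{G_n^\ast}$ is not confined to the imaginary axis for $n\ge 5$, and you propose to handle it by citing $[13]$ ``applied to $G_n^\ast$'' or by an unspecified direct check. The propositions of $[13]$ are stated for $G_n$, not for $G_n^\ast$, and the heuristic you offer (the frequency set $\{\log k: 2\le k\le n,\ k\ne p_{k_n}\}$ is incommensurable) is not a proof. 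As written, nothing in your proposal actually rules out the possibility that all zeros of some $G_n^\ast$, $n\ge 5$, lie on the imaginary axis; this is precisely why the present theorem is \emph{not} a line-by-line transcription of the proof of Theorem 2, which could lean on the cited ingredient (c) for $G_n$.

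The paper closes this without any outside input: the ``all imaginary'' alternative is the degenerate case $P\equiv 1$ of the very Hadamard argument you then run. If every zero were $\pm i y_j$, one takes $P\equiv 1$, so $H_m^\ast=G_m^\ast$, Hadamard gives $A_m=\log(m!/p_{k_m})/(m-1)$ with no polynomial correction, and the resulting identity $G_m^\ast(z)=e^{2A_m z}G_m^\ast(-z)$ contradicts the growth of the two sides along $z=x\to+\infty$ once one knows $2A_m>\log m^\ast$, where $m^\ast=m-1$ if $m$ is prime and $m^\ast=m$ otherwise. So the ingredient you flag as hardest is a free corollary of the calculation you already intend to do, and no appeal to $[13]$ is needed. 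Two further points of care: the constant is $A_m=\log(m!/p_{k_m})/(m-1)$, not $\tfrac{1}{m}\log(\cdot)$, because $G_m^\ast(0)=m-1$; and the growth inequality must be split by primality of $m$ — for $m$ prime one needs $((m-1)!)^2>(m-1)^{m-1}$, which follows from $(k!)^2>k^k$, while for composite $m>5$ one needs $(m!)^2>m^{m-1}(m-1)^2$ together with $p_{k_m}\le m-1$. The single inequality $\bigl((n!)/p_{k_n}\bigr)^2>n^{n}$ you wrote down is not the correct statement in either case and is in fact false at $n=5$.
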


\begin{proof}
For $n=3$ one has $p_{k_{3}}=3$. Then $G_{3}^{\ast }(z)=1+2^{z}$, whose
zeros are 
\[
\frac{i\pi (2k+1)}{\log 2}\mbox{, }k\in 
\mathbb{Z}
\mbox{.} 
\]%
For $n=4$, $p_{k_{4}}=3$, so $G_{4}^{\ast }(z)=1+2^{z}+4^{z}$ and then its
zeros are 
\[
\frac{i2\pi (3k+1)}{3\log 2}\mbox{, }\frac{i2\pi (3k+2)}{3\log 2}\mbox{, }%
k\in 
\mathbb{Z}
\mbox{.} 
\]%
Hence, for $n=3$, $4$, the zeros of $G_{n}^{\ast }(z)$ are all imaginary
and, consequently, the theorem follows in both cases.

Assume that, for some $m>4$, there exists a function $G_{m}^{\ast }(z)$
whose zeros $(\alpha _{m,l})_{l=1,2,...}$ are of the form \ 
\begin{equation}\label{4.1}
\Re \alpha _{m,l}\neq 0\mbox{ if }l\leq p\mbox{, }\Re \alpha _{m,l}=0\mbox{
if }l>p\mbox{,}  
\end{equation}%
for some positive integer $p$. By noting that $G_{m}^{\ast }(\overline{z})=%
\overline{G_{m}^{\ast }(z)}$ for all $z\in 
\mathbb{C}
$, the zeros of $G_{m}^{\ast }(z)$ are conjugate and then necessarily $p$ is
even. Hence, for some integer $q$, the number $p=2q\geq 2$ and the zeros $%
\alpha _{m,l}$, if $l>p$, are of the form $\pm iy_{j}$ with $y_{j}>0$ for
all $j\geq 1$. Let us denote by $P(z)$ the polynomial with leader
coefficient $1$ and zeros $(\alpha _{m,l})_{l=1,2,...p}$. Then%
\begin{equation}\label{4.2}
P(z)=(z^{2}-2a_{1}z+\left\vert \alpha _{m,1}\right\vert ^{2})\cdot \cdot
\cdot (z^{2}-2a_{q}z+\left\vert \alpha _{m,q}\right\vert ^{2})\mbox{,} 
\end{equation}%
where $a_{l}:=\Re \alpha _{m,l}$, $l=1,...,q$. By considering the function 
\[
H_{m}^{\ast }(z):=\frac{G_{m}^{\ast }(z)}{P(z)}\mbox{,} 
\]%
entire of order $1$ with zeros $\alpha _{m,l}$ for $l>p$, by Hadamard's
factorization theorem $\left[ 1\mbox{, Th. 4.4.3}\right] $, we have 
\begin{equation}\label{4.3}
H_{m}^{\ast }(z)=e^{A_{m}z+B_{m}}\prod_{j=1}^{\infty }\left( 1+\frac{z^{2}}{%
y_{j}^{2}}\right) \mbox{.}  
\end{equation}%
The constants $B_{m}$ and $A_{m}$ are given by the relations 
\[
e^{B_{m}}=H_{m}^{\ast }(0)=\frac{m-1}{\left\vert \alpha _{m,1}...\alpha
_{m,q}\right\vert ^{2}}\mbox{,} 
\]%
\[
A_{m}=\frac{(H_{m}^{\ast })^{\prime }(0)}{e^{B}}=\frac{\log (\frac{m!}{%
p_{k_{m}}})}{m-1}+2\left( \frac{a_{1}}{\left\vert \alpha _{m,1}\right\vert
^{2}}+...+\frac{a_{q}}{\left\vert \alpha _{m,q}\right\vert ^{2}}\right) 
\mbox{,} 
\]%
where $p_{k_{m}}$ denotes, as usual, the last prime not exceeding $m$.
Noticing (4.3), $H_{m}^{\ast }(z)e^{-A_{m}z}$ is an even function, then 
\[
H_{m}^{\ast }(z)e^{-A_{m}z}=H_{m}^{\ast }(-z)e^{A_{m}z}\mbox{ for all }z\in 
\mathbb{C}
\mbox{.} 
\]%
Hence, substituting $H_{m}^{\ast }(z)$ by $\frac{G_{m}^{\ast }(z)}{P(z)}$,
we get 
\begin{equation}\label{4.4}
\frac{P(-z)}{P(z)G_{m}^{\ast }(-z)}=\frac{e^{2A_{m}z}}{G_{m}^{\ast }(z)}%
\mbox{, for all }z\in 
\mathbb{C}
\mbox{.}  
\end{equation}%
Now, if we assume that any zero of $P(z)$ is a zero of $P(-z)$, then $%
(\alpha _{m,l})_{l=1,2,...p}$ are conjugate and opposite, so $q$ is even,
and $P(z)$ coincides with $P(-z)$. Thus $\frac{a_{1}}{\left\vert \alpha
_{m,1}\right\vert ^{2}}+...+\frac{a_{q}}{\left\vert \alpha
_{m,q}\right\vert ^{2}}=0$, so%
\begin{equation}\label{4.5}
A_{m}=\frac{\log (\frac{m!}{p_{k_{m}}})}{m-1}\mbox{.}  
\end{equation}%
By writing $G_{m}^{\ast }(z)=1+2^{z}+...+\left( m^{\ast }\right) ^{z}$ (it
does not have the term $p_{k_{m}}^{z}$), where%
\[
m^{\ast }:=\left\{ 
\begin{array}{c}
m-1\mbox{, if }m\mbox{ is prime} \\ 
m\mbox{, otherwise.}%
\end{array}%
\right. \mbox{,} 
\]%
we claim that $2A_{m}-\log m^{\ast }>0$ for all $m>4$. Indeed, by assuming
that $m$ is prime, $m=p_{k_{m}}$, so $m^{\ast }=m-1$ and, by (4.5), one has 
\begin{equation}\label{4.6}
2A_{m}-\log m^{\ast }=2\frac{\log ((m-1)!)}{m-1}-\log (m-1)\mbox{.} 
\end{equation}%
But, $2\frac{\log ((m-1)!)}{m-1}-\log (m-1)$ is positive by virtue of the
inequality 
\begin{equation}\label{4.7}
(k!)^{2}>k^{k}\mbox{ for all integer }k>2\mbox{,}  
\end{equation}%
which can be easily proved by induction and by the mean value theorem
applied on the $\log $ function. Hence the claim follows provided that $m$
be a prime number. Let us suppose that $m$ is a composite number greater
than $5$. From (4.5) and the definition of $m^{\ast }$, we get 
\[
2A_{m}-\log m^{\ast }=2\frac{\log (\frac{m!}{p_{k_{m}}})}{m-1}-\log m\mbox{.%
} 
\]%
Then $2A_{m}-\log m^{\ast }$ is positive if and only if $%
(m!)^{2}>m^{m-1}(p_{k_{m}})^{2}$. But this last inequality, taking into
account that $p_{k_{m}}\leq m-1$, can be immediately deduced from the
inequality, sharpest than (4.7), 
\[
(k!)^{2}>k^{k-1}(k-1)^{2}\mbox{ for all integer }k>5\mbox{,} 
\]%
which can be also demonstrated by using induction and by the mean value
theorem applied on the $\log $ function. Hence, 
\begin{equation}\label{4.8}
2A_{m}-\log m^{\ast }>0\mbox{ for any }m>4\mbox{,}  
\end{equation}%
as we claimed. Now, by making $z=$ $x>0$ in (4.4) and by taking the limit
when $x\rightarrow +\infty $, because of (4.8), the left side of (4.4) tends
to $1$ whereas the right side tends to $+\infty $, a contradiction.
Therefore, there is at least\ one zero of $P(z)$ which is not a zero of $%
P(-z)$. Then, by writing (4.4) of the form 
\begin{equation}\label{4.9}
\frac{P(-z)}{P(z)}=\frac{e^{2A_{m}z}G_{m}^{\ast }(-z)}{G_{m}^{\ast }(z)}%
\mbox{, for all }z\in 
\mathbb{C}
\mbox{,}  
\end{equation}%
the left side of (4.9) is a meromorphic function with at least a pole,
whereas the right side is a quotient of exponential polynomials with a
finite number of poles. It means, by Shields's theorem $\left[ 21\right] $,
that the right side of (4.9) is an exponential polynomial and then we are
led to a new contradiction. Therefore, there is no integer $m>4$ for which
the function $G_{m}^{\ast }(z)$ have its zeros of the form (4.1) and, in
particular, are not all of them situated on the imaginary axis. Indeed, if
this were so, by defining the polynomial $P(z)$ as identically $1$, we would
be led to the following contradiction: by taking the limit when $%
z=x\rightarrow +\infty $ in (4.4), the left side of (4.4) tends to $1$
whereas the right side one tends to $+\infty $. Consequently, all the
functions $G_{n}^{\ast }(z)$, $n>4$, have infinitely many zeros in at least
one of the half-planes $\left\{ z:\Re z<0\right\} $, $\left\{ z:\Re
z>0\right\} $. Let $(z_{n,l}^{\ast })_{l=1,2,...}$ be all the zeros of $%
G_{n}^{\ast }(z)$ for a fixed $n>4$. Suppose, for example, that $\Re
z_{n,l}^{\ast }\geq 0$ for all $l\geq 1$ and let $z_{n,j}^{\ast }$ be a zero
of $G_{n}^{\ast }(z)$ such that $\Re z_{n,j}^{\ast }>0$. Then if we take $%
\epsilon =\Re z_{n,j}^{\ast }$, noticing $G_{n}^{\ast }(z)$ is an
almost-periodic function, because Lemma 1, $G_{n}^{\ast }(z)$ has infinitely
many zeros in the strip 
\[
S_{\frac{\epsilon }{2}}:=\left\{ z:\frac{\epsilon }{2}<\Re z<\frac{3\epsilon 
}{2}\right\} \mbox{,} 
\]%
which implies that 
\begin{equation}\label{4.10}
\sum_{l=1}^{\infty }\Re z_{n,j}^{\ast }=+\infty \mbox{.}  
\end{equation}%
On the other hand, since $G_{n}^{\ast }(z)$ is an exponential polynomial
with all its coefficients equal to $1$, by Ritt's formula $\left[ 19,\mbox{
formula (9)}\right] $, we have 
\[
\sum_{l=1}^{\infty }\Re z_{n,l}^{\ast }=O(1)\mbox{,} 
\]%
contradicting (4.10). Then $G_{n}^{\ast }(z)$ has at least a zero in the
half-plane $\left\{ z:\Re z<0\right\} $ and again, by Lemma 1, $G_{n}^{\ast
}(z)$ has infinitely many zeros with negative real part. This completes the
proof.
\end{proof}

The next result is obtained directly from the above theorem.

\begin{corollary}
The bounds corresponding to the functions $G_{n}^{\ast }(z)$ 
\[
a_{n}^{\ast }:=\inf \left\{ \Re z:G_{n}^{\ast }(z)=0\right\} \mbox{, }%
b_{n}^{\ast }:=\sup \left\{ \Re z:G_{n}^{\ast }(z)=0\right\} 
\]
satisfy 
\[
a_{n}^{\ast }=b_{n}^{\ast }=0\mbox{ if }n=3\mbox{, }4 
\]%
and 
\[
a_{n}^{\ast }<0\mbox{, }b_{n}^{\ast }>0\mbox{ for all }n>4\mbox{.} 
\]
\end{corollary}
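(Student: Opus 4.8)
The plan is to read the statement off directly from Theorem 16, together with the standard fact that the zeros of an exponential polynomial with real frequencies lie in a vertical strip, so that $a_{n}^{\ast }$ and $b_{n}^{\ast }$ are finite real numbers rather than $\pm \infty $.

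First I would dispose of the cases $n=3$ and $n=4$. By Theorem 16 every zero of $G_{n}^{\ast }(z)$ is then purely imaginary, that is, has real part $0$; and $G_{n}^{\ast }(z)$ certainly does have zeros, as the explicit lists $\frac{i\pi (2k+1)}{\log 2}$ for $n=3$ and $\frac{i2\pi (3k+1)}{3\log 2}$, $\frac{i2\pi (3k+2)}{3\log 2}$ for $n=4$ already exhibit. Hence $\left\{ \Re z:G_{n}^{\ast }(z)=0\right\} =\left\{ 0\right\} $, and therefore $a_{n}^{\ast }=b_{n}^{\ast }=0$.

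Next, for $n>4$: since $n>2$ and $n\notin \left\{ 3,4\right\} $, Theorem 16 guarantees that $G_{n}^{\ast }(z)$ has infinitely many zeros in the open left half-plane $\left\{ \Re z<0\right\} $ and infinitely many in the open right half-plane $\left\{ \Re z>0\right\} $. Choosing a zero $z_{-}$ with $\Re z_{-}<0$ gives $a_{n}^{\ast }=\inf \left\{ \Re z:G_{n}^{\ast }(z)=0\right\} \leq \Re z_{-}<0$, and choosing a zero $z_{+}$ with $\Re z_{+}>0$ gives $b_{n}^{\ast }=\sup \left\{ \Re z:G_{n}^{\ast }(z)=0\right\} \geq \Re z_{+}>0$. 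Finiteness of the infimum and supremum follows because $G_{n}^{\ast }(z)=1+2^{z}+\cdots $ (with the term $p_{k_{n}}^{z}$ omitted) is an exponential polynomial with real frequencies, whose zero set lies in a vertical strip — a property already invoked in the proof of Proposition 8.

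There is essentially no obstacle here: the corollary is an immediate consequence of Theorem 16. The only things one must not overlook are that $G_{n}^{\ast }(z)$ does have zeros at all in the cases $n=3,4$ (so the infimum and supremum are taken over a nonempty set), and that the vertical-strip property must be used to ensure that $a_{n}^{\ast }$ and $b_{n}^{\ast }$ are genuine real numbers.
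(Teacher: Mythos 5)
Your proof is correct and takes essentially the same approach the paper intends: the paper offers no explicit proof, saying only that the corollary ``is obtained directly from the above theorem,'' and the two observations you fill in (nonemptiness of the zero set for $n=3,4$, and the vertical-strip property guaranteeing finiteness of $a_n^\ast$ and $b_n^\ast$) are exactly the routine checks that make that deduction rigorous. The only slip is a reference to ``Theorem 16''; the relevant theorem in the paper is Theorem 15.
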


Now we are ready to prove the main result of this paper.

\begin{theorem}
For each $n>2$, there exists $\delta _{n}>0$ such that $\left[ a_{n,}\delta
_{n}\right] \subset R_{n}$ and $\left[ -\delta _{n},b^{(n)}\right] \subset
R^{(n)}$.
\end{theorem}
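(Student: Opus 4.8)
The plan is to establish the claim for the functions $G_{n}(z)$ first, since the assertion for $R^{(n)}$ will then follow by the reflection $R^{(n)}=-R_{n}$ recorded in (2.6), together with $b^{(n)}=-a_{n}$. So I focus on producing, for each $n>2$, a number $\delta_{n}>0$ with $\left[a_{n},\delta_{n}\right]\subset R_{n}$. The cases $n=3,4$ are already settled by Corollary 11 and Theorem 14, where $R_{n}=\left[a_{n},b_{n}\right]$ and $b_{n}>0$; so one may take $\delta_{n}=b_{n}$ there, and I would dispose of these at the outset. Hence the real content is $n>4$.

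For $n>4$, the idea is to feed the bound $x_{0}=a_{n}$ into Theorem 10. Since $a_{n}\in R_{n}$ and, by Corollary 3, $a_{n}<0$, Theorem 10 gives $\left[a_{n},b_{n,a_{n}}\right]\cap\left[a_{n},b_{n}\right]\subset R_{n}$, i.e. $\left[a_{n},\min\{b_{n,a_{n}},b_{n}\}\right]\subset R_{n}$. Now invoke Lemma 13 with $x_{0}=a_{n}$: it yields $b_{n,a_{n}}>b_{n}^{\ast}$. But by Corollary 17 (the analogue of Corollary 3 for $G_{n}^{\ast}$), since $n>4$ we have $b_{n}^{\ast}>0$, hence $b_{n,a_{n}}>0$. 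Also $b_{n}>0$ by Corollary 3. Therefore $\min\{b_{n,a_{n}},b_{n}\}>0$, and setting $\delta_{n}:=\min\{b_{n,a_{n}},b_{n}\}>0$ gives $\left[a_{n},\delta_{n}\right]\subset R_{n}$, as desired. (One must make sure $\delta_{n}\le b_{n}$ so that $\left[a_{n},\delta_{n}\right]$ indeed lies in the interval $\left[a_{n},b_{n}\right]$ on which the Theorem 10 conclusion is phrased; taking the minimum with $b_{n}$ handles this automatically.)

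Finally, translate to the partial sums. By (2.6), $a^{(n)}=-b_{n}$ and $b^{(n)}=-a_{n}$, and since $Z_{\zeta_{n}}=-Z_{G_{n}}$ one has $R^{(n)}=-R_{n}$. Applying the negation map to $\left[a_{n},\delta_{n}\right]\subset R_{n}$ yields $\left[-\delta_{n},-a_{n}\right]=\left[-\delta_{n},b^{(n)}\right]\subset R^{(n)}$, which is the second inclusion claimed. So the same $\delta_{n}$ works for both halves of the statement.

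The proof is essentially a bookkeeping assembly of Theorem 10, Lemma 13, and Corollaries 3 and 17, so there is no single hard computational step; the only point that requires care is the logical one of checking that the hypotheses of Theorem 10 and Lemma 13 are genuinely available at $x_{0}=a_{n}$ (namely $a_{n}\in R_{n}$, which holds because $a_{n}$ is attained as the real part of a zero of $G_{n}$ and $R_{n}$ is closed) and that the positivity inputs $b_{n}>0$ and $b_{n}^{\ast}>0$ apply — the latter being exactly where the restriction $n>4$ is used, so the small cases $n=3,4$ must be treated separately via the earlier explicit determinations of $R_{3}$ and $R_{4}$. I expect the write-up to be short.
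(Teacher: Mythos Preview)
Your proof is correct and follows essentially the same route as the paper's: apply Theorem~10 at $x_{0}=a_{n}$, use Lemma~13 to bound $b_{n,a_{n}}>b_{n}^{\ast}$, and set $\delta_{n}=\min\{b_{n,a_{n}},b_{n}\}$. The only difference is that the paper avoids your case split: Corollary~16 (which you mislabel as Corollary~17) gives $b_{n}^{\ast}\ge 0$ for \emph{all} $n>2$, and since Lemma~13 yields the strict inequality $b_{n,a_{n}}>b_{n}^{\ast}$, one gets $b_{n,a_{n}}>0$ uniformly without needing to treat $n=3,4$ separately.
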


\begin{proof}
From Corollary 3 we have $a_{n}<0$ and $b_{n}>0$, for all $n>2$. By Lemma
13, $b_{n}^{\ast }<b_{n,x_{0}}$ for every integer $n>2$ and for arbitrary
real $x_{0}$. Then, in particular, for $x_{0}=a_{n}$, we have 
\begin{equation}\label{4.11}
b_{n}^{\ast }<b_{n,a_{n}}\mbox{ for every }n>2\mbox{.}  
\end{equation}%
Since $a_{n}\in R_{n}$, by Theorem 10, 
\begin{equation}\label{4.12}
\left[ a_{n},b_{n,a_{n}}\right] \cap \left[ a_{n,}b_{n}\right] \subset R_{n}%
\mbox{.}  
\end{equation}%
Because of Corollary 16, $0\leq b_{n}^{\ast }$ for every $n>2$. Then by
(4.11), $0<b_{n,a_{n}}$ for every $n>2$. Hence, 
\[
\delta _{n}:=\min \left\{ b_{n,a_{n}},b_{n}\right\} 
\]%
is a positive number for every $n>2$. Thus, by (4.12), we get 
\[
\left[ a_{n},b_{n,a_{n}}\right] \cap \left[ a_{n,}b_{n}\right] =\left[
a_{n,}\delta _{n}\right] \subset R_{n}\mbox{ for every }n>2\mbox{.} 
\]%
Finally, from (2.6), we obtain 
\[
\left[ -\delta _{n},-a_{n}\right] =\left[ -\delta _{n},b^{(n)}\right]
\subset R^{(n)}\mbox{ for every }n>2\mbox{,} 
\]%
and then the theorem follows.
\end{proof}

\begin{remark}
Observe that the proof of Theorem 17, valid for all $n>2$, is essentially
based on the old notion of level curve. Nevertheless (1.7) follows for all $%
n>$ $N$, for some positive integer $N$ whose existence depends of the Prime
Number Theorem (see $\left[ 15\mbox{, Lemma }7\right] $). Then $\left[ 15%
\mbox{, Th.12}\right] $ improves the Theorem 17 of an optimal form, provided
that $n>N$, because the $\delta _{n}$'s are equal to $b_{n}=-a^{(n)}=n\log
2+o(n)$, $n\rightarrow \infty $, by virtue of (1.5). The same occurs when $%
n=3$, $4$, by virtue of Corollary 11 and Theorem 14, respectively.
\end{remark}

An important conclusion is deduced from Theorem 17, namely, each set $\cap
_{n=3}^{\infty }R_{n}$, $\cap _{n=3}^{\infty }R^{(n)}$ contains a
non-degenerated closed interval.

\begin{corollary}
There exist $\delta \geq 0$ and $a<0$ such that $\cap _{n=3}^{\infty
}R_{n}\supset \left[ a,\delta \right] $ and $\cap _{n=3}^{\infty
}R^{(n)}\supset \left[ -\delta ,-a\right] $.
\end{corollary}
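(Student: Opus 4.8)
The plan is to produce the common interval simply by intersecting, over $n\geq 3$, the intervals supplied by Theorem 17: I would set $a:=\sup_{n\geq 3}a_{n}$ and $\delta:=\inf_{n\geq 3}\delta_{n}$, and then verify that these two numbers have the required signs and that $[a,\delta]$ really lies inside every $R_{n}$. First I would recall from Theorem 17 that for each $n>2$ there is a number $\delta_{n}>0$ with $[a_{n},\delta_{n}]\subset R_{n}$, and from Corollary 3 that $a_{n}<0$ for every $n>2$.

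The key quantitative input is that $a:=\sup_{n\geq 3}a_{n}$ is not merely $\leq 0$ but strictly negative, and in particular finite. For this I would combine the identity $a_{n}=-b^{(n)}$ from (2.6) with the Montgomery--Vaughan bounds (1.4), which give $\lim_{n\to\infty}b^{(n)}=1$ and hence $\lim_{n\to\infty}a_{n}=-1$. Thus only finitely many $a_{n}$ exceed $-\frac{1}{2}$; those finitely many are each $<0$ by Corollary 3, so the maximum of that finite set is $<0$, while every remaining $a_{n}$ is $\leq -\frac{1}{2}$. Hence $a=\sup_{n\geq 3}a_{n}<0$. Setting $\delta:=\inf_{n\geq 3}\delta_{n}$, which is $\geq 0$ as an infimum of positive numbers, we obtain $a<0\leq\delta$, so $[a,\delta]$ is a genuine non-degenerate closed interval.

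Then, for each fixed $n\geq 3$ one has $a_{n}\leq a$ and $\delta\leq\delta_{n}$, whence $[a,\delta]\subset[a_{n},\delta_{n}]\subset R_{n}$ by Theorem 17; since $n$ is arbitrary this yields $[a,\delta]\subset\cap_{n=3}^{\infty}R_{n}$. Finally, because $G_{n}(z)=\zeta_{n}(-z)$ the reflection $z\mapsto -z$ carries the zero set of $\zeta_{n}$ onto that of $G_{n}$, so, consistently with (2.6), $R^{(n)}=-R_{n}$ for every $n$; applying this to the inclusion just established gives $[-\delta,-a]\subset\cap_{n=3}^{\infty}R^{(n)}$, which completes the proof.

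I expect the only delicate point to be the strict inequality $a<0$ (as opposed to $a\leq 0$): the rest is a formal manipulation of the inclusions of Theorem 17 together with the reflection identity (2.6), but ruling out $a=0$ genuinely requires knowing that the $b^{(n)}$ stay bounded away from $0$ for large $n$, which is exactly what (1.4) provides (indeed $b^{(n)}\to 1$). It is worth noting that even the choice $\delta=0$ would already give the stated conclusion, so no lower bound on the $\delta_{n}$ is needed.
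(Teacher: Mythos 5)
Your proposal is correct and follows essentially the same route as the paper: you set $\delta=\inf_n\delta_n$ and $a=\sup_n a_n$, establish $a<0$ via the identity $a_n=-b^{(n)}$ from (2.6) together with $b^{(n)}\to 1$ from (1.4) and $b^{(n)}>0$ from Corollary 3, and then transfer the inclusion to $R^{(n)}$ through the reflection $R^{(n)}=-R_n$. The only cosmetic difference is that you spell out the finiteness argument behind $\sup a_n<0$ and the formal verification $[a,\delta]\subset[a_n,\delta_n]\subset R_n$ a bit more explicitly than the paper, which states $b:=\min\{b^{(n)}:n>2\}>0$ directly and leaves the final inclusion implicit.
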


\begin{proof}
By virtue of Theorem 17, we define $\delta :=\inf \left\{ \delta
_{n}:n>2\right\} $ and then, since $\delta _{n}>0$ for all $n>2$,
necessarily $\delta \geq 0$. By (1.4), all the terms of the sequence $\left(
b^{(n)}\right) _{n>2}$, except at most a finite quantity of them, are
arbitrarily close to $1$. By Corollary 3, $b^{(n)}>0$ for all $n>2$, then $%
b:=\min \left\{ b^{(n)}:n>2\right\} $ is a positive number. On the other
hand, by (2.6), one has $-a_{n}=b^{(n)}$ for all $n>2$, so $a:=\max \left\{
a_{n}:n>2\right\} =-b<0$. Now the proof is completed.
\end{proof}

\begin{corollary}
For all $n>2$, there exist infinitely many zeros, having positive and
negative real part, of $G_{n}(z)$ and $\zeta _{n}(z)$ arbitrarily close to
the imaginary axis. In particular, the point $0$ is an accumulation point
common to the sets $P_{G_{n}}=\left\{ \Re z:G_{n}(z)=0\right\} $ and $%
P_{\zeta _{n}}=\left\{ \Re z:\zeta _{n}(z)=0\right\} $ for all $n>2$.
\end{corollary}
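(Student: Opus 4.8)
The plan is to deduce Corollary 20 directly from Theorem 17 together with Lemma 1 and Corollary 19. The statement has two parts: first, the existence of infinitely many zeros with both positive and negative real part arbitrarily close to the imaginary axis; second, the specific conclusion that $0$ is an accumulation point common to all $P_{G_n}$ and $P_{\zeta_n}$ for $n>2$.

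First I would invoke Theorem 17 to obtain, for each fixed $n>2$, a number $\delta_n>0$ with $\left[ -\delta_n, b^{(n)}\right] \subset R^{(n)}$ and $\left[ a_n, \delta_n\right] \subset R_n$. Since $\delta_n>0$, the open interval $\left( 0, \delta_n\right)$ lies in $R_n$ and $\left( -\delta_n, 0\right)$ lies in $R^{(n)}$, and since these are closure sets of $P_{G_n}$ and $P_{\zeta_n}$ respectively, every point of these intervals is a limit of real parts of genuine zeros. In particular, picking any $x'\in\left( 0,\delta_n\right)$ there is a zero $z'$ of $G_n(z)$ with $\Re z'$ as close to $x'$ as we like; but more directly, since $R_n=\overline{P_{G_n}}$ contains the nondegenerate interval $\left[ a_n,\delta_n\right]$ straddling $0$, the point $0$ is an accumulation point of $P_{G_n}$. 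The analogous argument with $\left[ -\delta_n, b^{(n)}\right]\subset R^{(n)}=\overline{P_{\zeta_n}}$ gives that $0$ is an accumulation point of $P_{\zeta_n}$. This already yields the second sentence of the corollary.

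For the first sentence, I would combine the above with Lemma 1. Fix $n>2$ and $\epsilon>0$. Since $0$ is an accumulation point of $P_{G_n}$ with points of $P_{G_n}$ on both sides of $0$ (because $a_n<0<\delta_n$, and by Corollary 3 there actually are zeros with negative real part and, by Theorem 2, with positive real part), there is a zero $z_0$ of $G_n(z)$ with $0<\Re z_0<\epsilon$ and also a zero with $-\epsilon<\Re z_0<0$. Applying Lemma 1 to the almost-periodic function $G_n(z)$ on a suitable strip, each such zero generates infinitely many zeros in a thin vertical strip about the line $x=\Re z_0$, hence infinitely many zeros of $G_n(z)$ with real part in $(0,\epsilon)$ and infinitely many with real part in $(-\epsilon,0)$. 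Transferring via $\zeta_n(z)=G_n(-z)$, i.e. using $Z_{G_n}=-Z_{\zeta_n}$ and $a^{(n)}=-b_n$, $b^{(n)}=-a_n$ from (2.6), gives the same conclusion for $\zeta_n(z)$.

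I do not expect a serious obstacle here; the corollary is essentially a repackaging of Theorem 17. The only point requiring a little care is making sure the intervals furnished by Theorem 17 are genuinely nondegenerate on both sides of $0$ — that is, that $\delta_n>0$ and $a_n<0$ (respectively $b^{(n)}>0$) — which is exactly what Theorem 17 together with Corollary 3 guarantees, so that $0$ is a two-sided accumulation point rather than merely a boundary point approached from one side. A second minor check is that Lemma 1 applies: $G_n$ and $\zeta_n$ are almost-periodic on the whole plane (being exponential polynomials / Dirichlet polynomials) and are not identically zero, so the hypotheses of Lemma 1 are met on any strip containing the relevant zero.
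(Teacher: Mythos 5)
Your proposal is correct and takes essentially the same approach as the paper, which simply notes that by Theorem 17 the point $0$ is interior to $R_{n}=\overline{P_{G_{n}}}$ and $R^{(n)}=\overline{P_{\zeta_{n}}}$ for all $n>2$. You have merely spelled out the details that the paper leaves implicit, in particular the routine step from ``$0$ interior to the closure'' to ``$0$ is a two-sided accumulation point of the set itself,'' and the optional invocation of Lemma~1 to confirm the zeros are infinitely many.
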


\begin{proof}
From Theorem 17, it is enough to take into account that $0$ is an interior
point to each set $R_{n}:=\overline{P_{G_{n}}}$ , $R^{(n)}:=\overline{%
P_{\zeta _{n}}}$, for all $n>2$.
\end{proof}

In $\left[ 20\mbox{, Conjecture }15\right] $ Sepulcre and Vidal conjectured
that the point $0\in R_{n}$ for all $n\geq 2$. Now, as an easy consequence
of Theorem 17, we can give a positive answer to such question.

\begin{corollary}
The conjecture of Sepulcre and Vidal is true.
\end{corollary}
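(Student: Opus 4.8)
The plan is to split the claim into the trivial base case $n=2$ and the substantive range $n>2$, the latter having already been settled by the machinery built up in the paper, so that almost nothing remains to be done.

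First I would dispose of $n=2$ directly from the explicit description in (1.6): every zero of $G_{2}(z)=1+2^{z}$ has the form $\pi(2k+1)i/\log 2$ with $k\in\mathbb{Z}$, hence $P_{G_{2}}=\left\{0\right\}$ and therefore $R_{2}=\overline{P_{G_{2}}}=\left\{0\right\}$; in particular $0\in R_{2}$. Next, for every $n>2$ I would invoke Theorem 17, which furnishes $\delta_{n}>0$ with $\left[a_{n},\delta_{n}\right]\subset R_{n}$; since Corollary 3 gives $a_{n}<0$ for all $n>2$, the origin lies in $(a_{n},\delta_{n})\subset R_{n}$. Equivalently, one may simply quote Corollary 22, which already records that $0$ is an interior point of $R_{n}$ for every $n>2$. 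Taking the union of the two cases yields $0\in R_{n}$ for all $n\geq 2$, which is precisely the assertion of Sepulcre and Vidal's Conjecture 15.

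There is essentially no obstacle at this stage: all the difficulty has been absorbed upstream, into Theorem 17 and, before it, into the level-curve characterisations of Section 3 (Theorems 4, 5, 10 and Proposition 8) and the sign-counting and factorisation arguments of Sections 2 and 4 that force $a_{n}<0<b_{n}$ and $b_{n}^{\ast}\geq 0$. The only genuine point to verify is that the cited results carry no hidden restriction on $n$: Theorem 17 is stated uniformly for all $n>2$, Corollary 3 likewise, and the exceptional index $n=2$ is handled in closed form by (1.6). Consequently the cover of the index set $\{n:n\geq 2\}$ is complete, and the conjecture follows.
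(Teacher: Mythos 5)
Your proof is correct and follows essentially the same route as the paper: dispose of $n=2$ explicitly via (1.6) (which gives $R_{2}=\{0\}$), and then for $n>2$ conclude $0\in R_{n}$ from Theorem 17 together with $a_{n}<0$ from Corollary 3 (the paper routes this through Corollary 19, $\cap_{n\geq 3}R_{n}\supset[a,\delta]$, but that is the same fact repackaged). One small slip: the result you cite as ``Corollary 22'' (that $0$ is an interior point of $R_{n}$ for all $n>2$) is Corollary 20 in the paper; there is no Corollary 22.
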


\begin{proof}
By Corollary 19, $\cap _{n=3}^{\infty }R_{n}\supset \left[ a,\delta \right] $%
, where $a<0$ and $\delta \geq 0$. Then, in particular, $0\in R_{n}$ for all 
$n>2$. Noting (1.6), the set $R_{2}=\left\{ 0\right\} $, therefore it
follows that $0\in R_{n}$ for all $n\geq 2$. Consequently Sepulcre and
Vidal's conjecture is true.
\end{proof}

\end{document}